\documentclass[11pt]{article}
\usepackage{latexsym,amsmath,amssymb,epsfig,bbm,bm,graphicx,xcolor,cancel,mathrsfs}
\usepackage{textcomp}
\usepackage{multirow}
\usepackage{caption}  
\usepackage[left]{lineno}

\usepackage{epstopdf}
\usepackage{authblk}
\usepackage{setspace}
\DeclareGraphicsRule{.jpg}{.eps}{.bb}{.png}{}
\setlength{\paperwidth}{8.5truein}
\setlength{\paperheight}{11truein}
\setlength{\textwidth}{6.48in}
\setlength{\textheight}{8.95in}
\setlength{\oddsidemargin}{0.0in}
\setlength{\topmargin}{-0.5in}

\newcommand{\qed}{\hfill \ensuremath{\Box}}

\newcommand{\vertiii}[1]{{|\!|\!| #1 |\!|\!|}}

\makeatletter
      \newcommand\figcaption{\def\@captype{figure}\caption}
      \newcommand\tabcaption{\def\@captype{table}\caption}
\makeatother
\newcommand{\me}{\mathcal{E}}

\graphicspath{{./Figures/}}

\title{Variational Multiscale Proper Orthogonal Decomposition: Navier-Stokes Equations}
\author[1]{Traian Iliescu}
\author[2]{Zhu Wang\thanks{Corresponding author (wangzhu@ima.umn.edu).}}
\affil[1]{Department of Mathematics, Virginia Polytechnic Institute 
and State University, Blacksburg, VA 24061-0123, U.S.A.}
\affil[2]{Institute for Mathematics and its Applications, University of Minnesota, Minneapolis, MN 55455-0134, U.S.A.}

\date{Revision 1 -- \today}

\begin{document}
\linenumbers
\doublespacing
\maketitle
\newcommand{\lp}{\left(}
\newcommand{\rp}{\right)}
\newcommand{\lno}{\left\|}
\newcommand{\rno}{\right\|}

\newcommand{\ou}{\overline{u}}
\newcommand{\opsi}{\overline{\psi}}
\newcommand{\oq}{\overline{q}}
\newcommand{\oT}{\overline{T}}
\newcommand{\E}{\mathbbm{E}}
\newcommand{\orho}{\overline{\rho}}

\newcommand{\s}{\sigma}
\renewcommand{\k}{\kappa}
\newcommand{\p}{\partial}
\newcommand{\om}{\omega}
\newcommand{\Om}{\Omega}
\newcommand{\pOm}{\partial \Omega}
\newcommand{\e}{\epsilon}
\renewcommand{\a}{\alpha}
\renewcommand{\b}{\beta}
   \newcommand{\eps}{\varepsilon}
   \newcommand{\EX}{{\Bbb{E}}}
   \newcommand{\PX}{{\Bbb{P}}}

\newcommand{\nd}{{\nabla \cdot}}

\newcommand{\cF}{{\cal F}}
 \newcommand{\cH}{{\cal H}}
\newcommand{\cD}{{\cal D}}
\newcommand{\cO}{{\cal O}}
\newcommand{\cP}{\mathcal{P}}

\newtheorem{remark}{Remark}[section]
\newtheorem{lemma}{Lemma}[section]
\newtheorem{theorem}{Theorem}[section]
\newtheorem{corollary}{Corollary}[section]
\newtheorem{proposition}{Proposition}[section]
\newtheorem{definition}{Definition}[section]
\newtheorem{assumption}{Assumption}[section]
\newenvironment{proof}[1][Proof]{\begin{trivlist}
\item[\hskip \labelsep {\bfseries #1}]}{\end{trivlist}}
\newcommand{\kmodel}{k_{\mbox{Model}}}
\newcommand{\obu}{\overline{\bf u}}
\newcommand{\oobu}{\overline{\overline{\bf u}}}
\newcommand{\bu}{{\bf u}}
\newcommand{\bk}{{\bf k}}
\newcommand{\bs}{{\bf s}}
\newcommand{\oou}{\overline{\overline{u}}}
\newcommand{\op}{\overline{p}}
\newcommand{\of}{\overline{f}}
\newcommand{\obf}{\overline{\bf f}}
\newcommand{\ow}{\overline{w}}
\newcommand{\ov}{\overline{v}}
\newcommand{\ophi}{\overline{\phi}}
\newcommand{\oS}{\overline{S}}
\newcommand{\obS}{\overline{\bf S}}
\newcommand{\bv}{{\bf v}}
\newcommand{\obv}{\overline{\bf v}}
\newcommand{\bc}{{\bf c}}
\newcommand{\by}{{\bf y}}
\newcommand{\bw}{{\bf w}}
\newcommand{\bW}{{\bf W}}
\newcommand{\bU}{{\bf U}}
\newcommand{\obw}{\overline{\bf w}}
\newcommand{\bz}{{\bf z}}
\newcommand{\bZ}{{\bf Z}}
\newcommand{\obZ}{\overline{\bf Z}}
\newcommand{\bff}{{\bf f}}
\newcommand{\bee}{{\bf e}}
\newcommand{\bn}{{\bf n}}
\newcommand{\bx}{{\bf x}}
\newcommand{\bX}{{\bf X}}
\newcommand{\bH}{{\bf H}}
\newcommand{\bV}{{\bf V}}
\newcommand{\bL}{{\bf L}}
\newcommand{\bg}{{\bf g}}
\newcommand{\bj}{{\bf j}}
\newcommand{\br}{{\bf r}}
\newcommand{\grads}{\nabla^s}
\def\PP{{{\rm l}\kern - .15em {\rm P} }}
\def\PN2{{\PP_{N}-\PP_{N-2}}}
\newcommand{\erf}[1]{\mbox{erf}\left(#1\right)}
\newcommand{\D}{\nabla}
\newcommand{\id}{\mathrm{d}}
\newcommand{\I}{\mathbb{I}}
\newcommand{\N}{\mathbb{N}}
\newcommand{\R}{\mathbb{R}}
\newcommand{\Z}{\mathbb{Z}}
\newcommand{\mathR}{\R}
\newcommand{\mathN}{\N}
\newcommand{\mathZ}{\Z}
\newcommand{\mathI}{\mathbbm{I}}
\newcommand{\btau}{\boldsymbol{\tau}}
\newcommand{\bphi}{\boldsymbol{\varphi}}
\newcommand{\bvarphi}{\boldsymbol{\varphi}}
\newcommand{\bpsi}{\boldsymbol{\psi}}
\newcommand{\bfeta}{\boldsymbol{\eta}}
\newcommand{\blambda}{\boldsymbol{\lambda}}
\newcommand{\bPhi}{\boldsymbol{\Phi}}
\newcommand{\obphi}{\overline {\boldsymbol{\phi}}}
\newcommand{\bomega}{\boldsymbol{\omega}}
\newcommand{\bsigma}{\boldsymbol{\sigma}}
\newcommand{\orhoprime}{\overline{\rho^{\prime}}}
\newcommand{\bus}{{\bf u}^*}
\newcommand{\By}{\mathcal B(\by)}
\newcommand{\eci}[1]{\mathcal E_{#1}}
\newcommand{\dpyi}[1]{\delta_{#1}^+(\by)}
\newcommand{\dmyi}[1]{\delta_{#1}^-(\by)}
\newcommand{\cA}{{\mathcal A(\by)}}
\newcommand{\dyi}[1]{\delta_{#1}(\by)}
\newcommand{\cG}{{\mathcal G(\bx,\by)}}
\newcommand{\cGi}[1]{{\mathcal G_{#1}(\bx,\by)}}
\newcommand{\pti}{\partial_i}
\newcommand{\ptii}[1]{\partial_{#1}}
\newcommand{\ba}{{\bf a}}

\newcommand{\rey}{\mbox{Re}}

\newcommand{\tnp}{t^{k+1}}
\newcommand{\bb}{{\bf b}}
\newcommand{\fnp}{f^{k+1}}
\newcommand{\prp}{P_R^{'}}
\newcommand{\pr}{P_R}
\newcommand{\rn}{{\bf r}^k}
\newcommand{\ur}{\bu_r}
\newcommand{\urn}{\bu_r^k}
\newcommand{\utn}{\bu_t^k}
\newcommand{\urnp}{\bu_r^{k+1}}
\newcommand{\utnp}{\bu_t^{k+1}}
\newcommand{\un}{\bu^k}
\newcommand{\unp}{\bu^{k+1}}
\newcommand{\vr}{\bv_r}
\newcommand{\wrn}{\bw_r^k}
\newcommand{\wrnp}{\bw_r^{k+1}}
\newcommand{\wRnp}{\bw_R^{k+1}}
\newcommand{\etan}{{\boldsymbol \eta}^k}
\newcommand{\etanp}{{\boldsymbol \eta}^{k+1}}
\newcommand{\prn}{{\boldsymbol \phi}_r^k}
\newcommand{\prnp}{{\boldsymbol \phi}_r^{k+1}}
\newcommand{\prz}{{\boldsymbol \phi}_r^{0}}
\newcommand{\prm}{{\boldsymbol \phi}_{r}^{M}}
\newcommand{\sNn}{\sum\limits_{k=0}^{M-1}}
\newcommand{\sN}{\sum\limits_{k=0}^{M}}
\newcommand{\asNn}{\frac{1}{M}\sNn}
\newcommand{\asN}{\frac{1}{M+1}\sN}
\newcommand{\uph}{\upsilon^h}
\newcommand{\half}{\frac{1}{2}}

\newcommand{\opartial}{\overline{\partial}}

\begin{abstract}
We develop a variational multiscale proper orthogonal decomposition reduced-order model for turbulent incompressible Navier-Stokes equations. 
The error analysis of the full discretization of the model is presented.
All error contributions are considered: 
the spatial discretization error (due to the finite element discretization),
the temporal discretization error (due to the backward Euler method), and
the proper orthogonal decomposition truncation error.
Numerical tests for a three-dimensional turbulent flow past a cylinder at Reynolds number $\mbox{Re}=1000$ 
show the improved physical accuracy of the new model over the standard Galerkin and mixing-length proper orthogonal decomposition reduced-order models. 
The high computational efficiency of the new model is also showcased.
Finally, the theoretical error estimates are confirmed by numerical simulations of a two-dimensional Navier-Stokes problem. 
\end{abstract}
\smallskip
\noindent \textbf{Keywords.} Proper orthogonal decomposition, variational multiscale, reduced-order model, finite element method.

\section{Introduction}

Due to the complexity of fluid flows in many realistic engineering problems, millions or even billions of degrees of freedom are often required in a direct numerical simulation (DNS).  
To allow efficient numerical simulations in these applications, reduced-order models (ROMs) are often used.
The proper orthogonal decomposition (POD) has been one of the most popular approaches employed in developing ROMs for complex fluid flows \cite{AK04,lumley1967sit,Sir87a,Sir87b,Sir87c}. 
%
It starts by using a DNS (or experimental data) to generate a POD basis $\{ \bphi_{1}, \ldots, \bphi_d \}$ that maximizes the energy content in the system, where $d$ is the rank of the data set.
By utilizing the Galerkin method, one can project the original system onto the space spanned by only a handful of  dominant POD basis functions $\{ \bphi_{1}, \ldots, \bphi_r \}$, with $r \leq d$, which results in a low-order model ---  the Galerkin projection-based POD-ROM (POD-G-ROM). 

The POD-G-ROM has been applied successfully in the numerical simulation of laminar flows. 
It is well known, however, that a simple POD-G-ROM will generally produce erroneous results for turbulent flows \cite{Aubry1988}.
The reason is that, although the discarded POD modes $\{ \bphi_{r+1}, \ldots, \bphi_d \}$ only contain a small part of the system's kinetic energy, they do, however, have a significant impact on the dynamics.
To model the effect of the discarded POD modes, various approaches have been proposed (see, e.g., the survey in \cite{wang2011closure}).
In this report, we develop an approach that improves the physical accuracy of the POD-ROM for turbulent incompressible fluid flows by utilizing a variational multiscale (VMS) idea \cite{HMOW01,HOM01}. This method is an extension to the Navier-Stokes equations (NSE) of the VMS-POD-ROM that we proposed in \cite{wang2011variational} for convection-dominated convection-diffusion-reaction equations. Our approach employs an eddy viscosity (EV) to model the interaction between  the discarded POD modes and those retained in the POD-ROM. 
%
Instead of being added to all the resolved POD modes $\{ \bphi_{1}, \ldots, \bphi_r \}$, EV is only added to the small resolved scales (POD modes $\{ \bphi_{R+1}, \ldots, \bphi_r \}$ with $R<r$) in the VMS-POD-ROM.
Thus, the small scale oscillations are eliminated without polluting the large scale 
components of the approximation. 
The small scales in the VMS-POD-ROM are defined by a projection approach 
in \cite{wang2011variational}, which is also used in \cite{john2005finite,john2008finite_a,john2008finite_b,john2006two,layton2002connection} in the finite element (FE) context. 
We also note that a different approach was developed in  \cite{guermond1999stabilization,guermond1999stabilisation}.

In this report, the VMS-POD-ROM is extended and studied for the NSE.
A rigorous error analysis of the full discretization of the model (FE in space, backward Euler in time) is presented. 
A numerical test of the VMS-POD-ROM for three-dimensional (3D) turbulent flow past a circular cylinder at Reynolds number $\rey = 1000$ is conducted to investigate the physical accuracy of the model.
The theoretical error estimates are confirmed by using the VMS-POD-ROM in the numerical simulation of a two-dimensional (2D) flow.

The rest of this paper is organized as follows: In Section \ref{mod}, we briefly describe the POD methodology and introduce the new VMS-POD-ROM. The error analysis for the full discretization of the new model is presented in Section \ref{err}. The new methodology is tested numerically in Section \ref{num} for 
a 3D flow past a circular cylinder and 
a 2D flow problem. Finally, Section \ref{con} presents the conclusions and future research directions.

\section{Variational Multiscale Proper Orthogonal Decomposition}\label{mod}
We consider the numerical solution of the incompressible Navier-Stokes equations:
\begin{equation}
\label{eq:nse}
\left\{ 
\begin{array}{cc}
\bu_{t} - \nu \Delta \bu + \lp \bu \cdot \nabla\rp  \bu + \nabla p = \bff , & \text{ in } \Omega \times (0,T], \\
\nabla \cdot \bu = 0 , & \text{ in } \Omega \times (0,T] , \\
\bu = 0, & \text{ on } \partial \Omega \times (0,T] , \\
\bu({\bx}, 0) = \bu^0(\bx), & \text{ in }  \Omega ,
\end{array} 
\right.
\end{equation}
where $\bu(\bx, t)$ and $p(\bx, t)$ represent the fluid velocity and pressure of a flow in the region $\Om$, respectively, for $\bx\in \Om$, $t\in [0, T]$, and $\Om \subset \mathbb{R}^n$ with $n= 2$ or $3$; 
the flow is bounded by walls and driven by the force $\bff(\bx, t)$;  
$\nu$ is the reciprocal of the Reynolds number; 
and $\bu^0(\bx)$ denotes the initial velocity. 
We also assume that the boundary of the domain, $\pOm$, is polygonal when $n= 2$ and is polyhedral when $n= 3$.

The following functional spaces and notations will be used in the paper:
\begin{equation*}
	\bX=\bH^1_0\lp\Om\rp =\left\{\bv\in [L^2(\Om)]^n: \nabla \bv \in [L^2(\Om)]^{n\times n} \text{ and } \bv= {\bf 0} \text{ on } \p \Om \right\},
\end{equation*}	
\begin{equation*}
	Q=L^2_0\lp\Om\rp = \left\{q\in L^2(\Om): \int_{\Om} q\, \id {\bx} = 0 \right\}, 
\end{equation*}	
\begin{equation*}	
	\bV=\left\{\bv\in \bX: \lp \nabla \cdot \bv, q\rp  = 0, \forall\, q \in Q \right\}, \text{ and  }
\end{equation*}	
\begin{equation*}	
	\bV^h=\left\{\bv_h \in \bX^h: \lp \nabla \cdot \bv_h, q_h \rp  = 0, \forall\, q_h \in Q^h \right\}, 
\end{equation*}
where $\bX^h \subset \bX$ and $Q^h\subset Q$ are the FE spaces of the velocity and pressure, respectively.
In what follows, we consider the div-stable pair of FE spaces $\PP^{m} / \PP^{m-1}, \, m \geq 2$ \cite{layton2008introduction}. 
That is, the FE approximation of the velocity is continuous on $\Om$ and is an $n$-vector valued function with each component a polynomial of degree less than or equal to $m$ when restricted to an element, while that of the pressure is also continuous on $\Om$ and is a single valued function that is a polynomial of degree less than or equal to $m-1$ when restricted to an element. 
We emphasize, however, that our analysis extends to more general FE spaces. 
We consider the following spaces for the POD setting: 
\begin{subequations}
\begin{eqnarray}
\bX^r &:=& \text{span}\left\{\bphi_1, \bphi_2, \ldots, \bphi_r\right\}, \\
\bX^R &:=& \text{span} \left\{ \bphi_1, \bphi_2, \ldots, \bphi_R \right\}, \text{ and  } \\
\bL^R &:=& \text{span}\left\{ \nabla\bphi_1, \nabla\bphi_2, \ldots, \nabla\bphi_R \right\}, \label{eq:LR}
\end{eqnarray}
\end{subequations}
where $\bphi_j$, $j= 1, \ldots, r$, are the POD basis functions that will be defined in Section \ref{sec:pod}.  
We note that $\bX^R \subset \bX^r$, since $R< r$. 

We introduce the following notations: 
Let $\cal H$ be a real Hilbert space endowed with inner product $(\cdot, \cdot)_{\cal H}$ and norm $\|\cdot\|_{\cal H}$. 
Let the trilinear form $b^*(\cdot, \cdot, \cdot)$ be defined as 
\linelabel{line:b*}

\begin{equation*}
b^*\lp \bu, \bv, \bw\rp  =\frac{1}{2}\left[ \lp \lp \bu\cdot\nabla \rp \bv, \bw\rp - \lp \lp \bu\cdot\nabla \rp \bw, \bv\rp \right]
\label{eq:b*}
\end{equation*}
and the norm $\vertiii{\cdot}$ be defined as $\vertiii{\bf v}_{s, k}
:= \left(
	\frac{1}{M} \sum \limits_{i= 0}^{M-1} \| {\bf v}(\cdot, t_{i+1}) \|_k^{s}
\right)^{1/s}$, 
\linelabel{line:newnorm} 
where $s$ and $M$ are positive integers.

The weak formulation of the NSE \eqref{eq:nse} reads: 
Find $\bu \in \bX$ and $p \in Q$ such that
\begin{equation}
\left\{ 
	\begin{array}{cc}
		\lp  \bu_t , \bv \rp 
		+  \nu \lp  \D\bu , \D\bv \rp 
		+ b^*\lp \bu,\bu,\bv\rp 
		- \lp p , \nabla \cdot \bv \rp
		= \lp \bff, \bv\rp , 
		& \quad \forall \, \bv \in \bX, \\
		\lp \nabla \cdot \bu , q \rp 
		= 0,
		& \quad \forall \, q \in Q .
	\end{array} 
\right.
\label{eq:nse_weak}
\end{equation}
To ensure the uniqueness of the solution to \eqref{eq:nse_weak}, we make the following regularity assumptions (see Definition 29 and Remark 10 in \cite{layton2008introduction}): 
\begin{assumption}
\label{assumption_regularity}
In \eqref{eq:nse}, assume that 
$\bff\in L^2\lp 0, T; \bL^2(\Om)\rp$, 
$\bu^0\in \bV$, 
$\bu\in L^2\lp 0, T; \bX \rp \bigcap L^{\infty} \lp 0, T; \bL^2(\Om) \rp$, 
$\nabla \bu\in\lp L^4\lp 0, T; L^2(\Om)\rp \rp ^{n\times n}$, 
$\bu_t\in L^2\lp 0, T; \bX^*\rp$, and 
$p\in L^2 \lp 0, T; Q \rp$. 
\end{assumption}

The FE approximation of \eqref{eq:nse_weak} can be written as follows:
Find $\bu_h \in \bV^h$ such that
\begin{equation}
	\lp  \bu_{h,t} , \bv_h \rp 
	+  \nu \lp  \D\bu_h , \D\bv_h \rp 
	+ b^*\lp \bu_h,\bu_h,\bv_h \rp
	= \lp \bff, \bv_h \rp , 
	\quad \forall \, \bv_h \in \bV^h
\label{eq:nse_fe}
\end{equation}
and 
$\bu_h(\cdot, 0) = \bu^0_h\in \bV^h$.


\subsection{Proper Orthogonal Decomposition}\label{sec:pod}
We briefly describe the POD method, following \cite{KV01}.
For a detailed presentation, the reader is referred to \cite{chapelle2012galerkin,HLB96,singler2012new,Sir87a,volkwein2011model}.

Consider an ensemble of snapshots
$ \mathcal{R} := \mbox{span}\left\{ \bu(\cdot, t_0), \ldots, \bu(\cdot, t_M) \right\}$, which is a collection of velocity data from either numerical simulation results or experimental observations at time $t_i = i \Delta t$, $i=0, \ldots, M$ and let $\Delta t = \frac{T}{M}$.  
The POD method seeks a low-dimensional basis $\{ \bphi_1, \ldots, \bphi_r\}$ in $\cH$ 
that optimally approximates the snapshots in the following sense:
\begin{equation}
  \min \frac{1}{M+1} \sum_{\ell=0}^M 
  \left\| \bu(\cdot, t_{\ell}) - 
  \sum_{j=1}^r \left( \bu(\cdot, t_{\ell}),\bphi_j(\cdot) \right)_{\cH} \, \bphi_j(\cdot) 
  \right\|_{\cH}^2 
\label{pod_min}
\end{equation}
subject to the conditions that $(\bphi_j, \bphi_i)_{\cH} = \delta_{ij}, \ 1 \leq i, j \leq r$, where $\delta_{ij}$ is the Kronecker delta. 
To solve \eqref{pod_min}, one can consider the eigenvalue problem 
\begin{equation}
K \, \bz_j = \lambda_j \, \bz_j, \quad \text{for }j=1, \ldots, r,
\label{pod_eigenvalue}
\end{equation}
where $K \in \R^{(M+1) \times (M+1)}$ 
is the snapshot correlation matrix with entries  
\linelabel{line:snapshot_correlation}
$\displaystyle K_{k\ell} = \frac{1}{M+1} \left( \bu(\cdot, t_{\ell}) , \bu(\cdot, t_k) \right)_{\cH}$ 
for $\ell, k = 0, \ldots, M$,
$\bz_j$ is the $j$-th eigenvector, and 
$\lambda_j$ is the associated eigenvalue. 
The eigenvalues are positive and sorted in descending order $\lambda_1\geq \ldots \geq \lambda_r \geq 0$.  
It can then be shown that the solution
of \eqref{pod_min}, the POD basis function, is given by
\begin{equation}
\bphi_{j}(\cdot) = \frac{1}{\sqrt{\lambda_j}} \, \sum_{{\ell}= 0}^{M} (\bz_j)_{\ell} \, \bu(\cdot , t_{\ell}),
\quad 1 \leq j \leq r,
\label{pod_basis_formula}
\end{equation}
where $(\bz_j)_{\ell}$ is the $\ell$-th component of the eigenvector $\bz_j$. 
It can also be shown that the following error formula holds \cite{HLB96,KV01}:
\begin{equation}
\frac{1}{M+1} \sum_{{\ell}= 0}^M \left\| \bu(\cdot,t_{\ell}) - 
  \sum_{j=1}^r \left( \bu(\cdot,t_{\ell}),\bphi_j(\cdot) \right)_{\cH} \, \bphi_j(\cdot) 
  \right\|_{\cH}^2
= \sum_{j=r+1}^{d} \lambda_j \, ,
\label{pod_error_formula}
\end{equation}
where $d$ is the rank of $\mathcal{R}$. 

\begin{remark}
\label{rem:pod}
Since, as shown in \eqref{pod_basis_formula}, the POD basis functions are linear combinations of the snapshots , 
the POD basis functions satisfy the boundary conditions in \eqref{eq:nse} and are solenoidal.  
If the FE approximations are used as snapshots, the POD basis functions belong to $\bV^h$, which yields  $\bX^r\subset \bV^h$.
\end{remark}

The Galerkin projection-based POD-ROM employs both Galerkin truncation and Galerkin projection. The former yields an approximation of the velocity field by a linear combination of the truncated POD basis: 
\begin{equation}
\label{eq:pod}
  {\bu}\lp {\bf x}, t \rp \approx {\ur} \lp {\bf x}, t \rp 
  \equiv \sum_{j=1}^r a_j \lp t \rp \bphi_j \lp {\bf x} \rp,
\end{equation}
where 
$\left\{a_{j}\lp t\rp \right\}_{j=1}^{r}$ are the sought time-varying coefficients representing the POD-Galerkin trajectories. 
Note that $r\ll N$, where $N$ denotes the number of degrees of freedom in a DNS. 
Replacing the velocity $\bu$ with $\ur$ in the NSE \eqref{eq:nse}, using the Galerkin method, and projecting the resulted equations onto the space $\bX^r$, 
one obtains the {\it POD-G-ROM} for the NSE: Find $\ur\in\bX^r$ such that 
\begin{equation}
\lp  \frac{\partial \bu_{r}}{\partial t} , \bphi \rp 
+  \nu\lp  \D\ur , \D\bphi \rp 
+ b^*\lp \ur, \ur, \bphi\rp 
= \lp {\bf f}, \bphi\rp,  \quad
\forall \, \bphi \in \bX^r
\label{eq:pod-g}
\end{equation}
and $\bu_r(\cdot, 0) = \bu_r^0 \in \bX^r$. 
In \eqref{eq:pod-g}, the pressure term vanishes due to the fact that all POD modes are solenoidal and satisfy the appropriate boundary conditions. 
The spatial and temporal discretizations of \eqref{eq:pod-g} were considered in \cite{KV02,LCNY08}. 
Despite its appealing computational efficiency, the POD-G-ROM \eqref{eq:pod-g} has generally been limited to 
laminar flows.  
To overcome this restriction, we develop a closure method for the POD-ROM, which stems from the variational multiscale ideas.  

\subsection{Variational Multiscale Method}

Based on the concept of energy cascade and locality of energy transfer, the VMS method models the effect of unresolved scales by introducing extra eddy viscosities {\em to and only to} the resolved small scales \cite{HMOW01,HOM01}. 
For a standard FE discretization, the separation of scales is generally challenging. 
Indeed, unless special care is taken (e.g., mesh adaptivity is used), the FE basis does not include any a priori information regarding the scales displayed by the underlying problem. 
Since the POD basis functions are already listed in descending order of their kinetic energy content, the POD represents an ideal setting for the VMS methodology. 
Naturally, we regard the discarded POD basis functions as unresolved scales, $\{\bphi_1, \ldots, \bphi_R\}$ as resolved large scales, and $\{\bphi_{R+1}, \ldots, \bphi_{r}\}$ as resolved small scales, where $R < r$. 

We consider the orthogonal projection of   
$\bL^2$ on $\bL^R$, $\pr : \bL^2  \longrightarrow \bL^R$, defined by
\begin{equation}
\lp  \bu - \pr \bu, \bv_R \rp  = 0,
\quad \forall \, \bv_R \in \bL^R .
\label{def_pr}
\end{equation}
Let $\prp := \I - \pr$, where $\I$ is the identity operator.
We propose the {\em variational multiscale proper orthogonal decomposition reduced-order model ($P_R$-VMS-POD-ROM)} for the NSE: 
Find $\ur \in \bX^r$ such that
\begin{equation}
\lp  \frac{\partial \bu_{r}}{\partial t} , \bphi \rp 
+ \nu \lp  \D\bu_{r} , \D\bphi \rp 
+ b^*\lp \ur, \ur, \bphi\rp 
+ \alpha \, \lp \prp \nabla \ur , \prp \nabla \bphi\rp 
= \lp \bff, \bphi\rp,  \quad
\forall \, \bphi \in \bX^r,
\label{eq:vms-pod}
\end{equation}
where $\alpha>0$ is a constant EV coefficient and the initial condition is given by the $L^2$ projection of $\bu^0$ on $\bX^r$: 
\begin{equation}
\bu_r(\cdot, 0) = \bu_r^0 := \sum_{j=1}^r (\bu^0, \bphi_j)\bphi_j.
\label{eq:ic}
\end{equation} 

\begin{remark}
When $R=r$ or  $\alpha = 0$, the $P_R$-VMS-POD-ROM \eqref{eq:vms-pod} coincides with the standard POD-G-ROM, since no EV is introduced. 
When $R=0$, since EV is added to all modes in the POD-ROM, the $P_R$-VMS-POD-ROM \eqref{eq:vms-pod} becomes the {\it mixing-length POD-ROM (ML-POD-ROM)} \cite{Aubry1988,wang2011closure}: 
\begin{equation}
\lp  \frac{\partial \bu_{r}}{\partial t} , \bphi \rp 
+ \nu \lp  \D\bu_{r} , \D\bphi \rp 
+ b^*\lp \ur, \ur, \phi\rp 
+ \alpha \, \lp \nabla \ur, \nabla \bphi\rp 
= \lp \bff, \bphi\rp,  \quad
\forall \, \bphi \in \bX^r.
\label{eq:ml-pod}
\end{equation}
\end{remark}

\begin{remark}
\label{rem:vms_model}
We note that the $P_R$-VMS-POD-ROM \eqref{eq:vms-pod} is different from the VMS-POD-ROM introduced in \cite{wang2011closure}. 
Indeed, the former uses the operator $\prp$ and a constant EV coefficient, whereas the later does not use the operator $\prp$ and uses a variable EV coefficient. 
\end{remark}

We consider the full discretization of \eqref{eq:vms-pod}: 
We use the backward Euler method with a time step $\Delta t$ for the time integration and the FE space $\PP^{m}$ with $m\geq 2$ and a mesh size $h$ for the spatial discretization. 
For $k= 0, \ldots, M$, denote the approximation solution of \eqref{eq:vms-pod} at $t_k = k\Delta t$ to be $\urn = \bu_{h, r}(t_k)$ and the force at $t_k$ to be $\bff^{k} = \bff(t_k)$, respectively. 
Note that we have dropped the subscript $``h"$ in $\urn$ for clarity of notation.    
The discretized $P_R$-VMS-POD-ROM then reads: 
Find $\urn\in \bX^r$ such that 
\begin{eqnarray}
\lp  \frac{\urnp-\urn}{\Delta t} , \bphi \rp 
+ \nu \lp  \D\urnp , \D\bphi \rp 
+ b^*\lp \urnp, \urnp, \bphi\rp 
+ \alpha \, \lp \prp \nabla \urnp , \prp \nabla \bphi\rp 
= \lp \bff^{k+1}, \bphi\rp,  \nonumber  \\ 
\forall \, \bphi \in \bX^r,\, k = 0, \ldots, M-1 \hspace{.8cm}
\label{eq:vms-pod-be}
\end{eqnarray}
and the initial condition given in \eqref{eq:ic}: ${\bf u}_r^0 = \sum \limits_{j=1}^{r} ({\bf u}^0 , \bphi_j) \, \bphi_j$. 

In the sequel, we denote by $\bu^k$ and  $\bu^k_h$ the velocity solution of  \eqref{eq:nse_weak} and the FE velocity approximation of \eqref{eq:nse_fe} at $t= t_k$, respectively. 


\section{Error Estimates}\label{err}
In this section, we present the error analysis for the $P_R$-VMS-POD-ROM discretization \eqref{eq:vms-pod-be}. 
We take the FE solutions $\bu_h(\cdot, t_i)$, $i=1, \ldots, M$ as snapshots and choose $\mathcal{H} = L^2$ in the POD generation. 
The error source includes three main components: the spatial FE discretization error, the temporal discretization error, and the POD truncation error. 
We derive the error estimate in two steps: First, we gather some necessary assumptions and preliminary results in Section \ref{sec:step1}. Then, we present the main result in Section \ref{sec:step2}. 

In the sequel, \linelabel{line:C}
we assume $C$ to be a generic constant, which varies in different places, but is always independent of the finite element mesh size $h$, the finite element order $m$, the eigenvalues $\lambda_j$ and the time step size $\Delta t$. 
 
\subsection{Preliminaries}\label{sec:step1} 
We will need the following results for developing a rigorous error estimate:
\begin{assumption}[finite element error]
\label{lem:fem}
\linelabel{line:lem:fem}
We assume that the FE approximation $\bu_h$ of \eqref{eq:nse_fe} satisfies the following error estimate:
\begin{equation}
\|\bu - \bu_h\| + h \| \nabla (\bu - \bu_h) \| \leq C (h^{m+1} + \Delta t). 
\label{eq:fem_err_u}
\end{equation}
We also assume the following standard approximation property (see, e.g., page 166 in \cite{layton2008introduction}): 
\begin{equation}
\|p- q_h\| \leq C h^m.
\label{eq:fem_err_p}
\end{equation}
\end{assumption}
\begin{remark}
\label{rem:fem_err}
In chapter V of \cite{GR79}, a linearized version of the implicit (backward) Euler scheme of the NSE \eqref{eq:nse} was considered (see equation (2.2)). 
Theorem 2.2 in the same chapter proves (optimal) first order error estimates with respect to the time variable in the $L^2$ norm. 
On page 170 it is mentioned that the discretization with respect to the space variable is not considered, since it has already been throughly studied in chapter IV. 

In \cite{geveci1989convergence}, the same linearized version of the implicit (backward) Euler scheme as that in equation (2.2) in chapter V of \cite{GR79} is considered. The theorem on page 44 in \cite{geveci1989convergence} proves (optimal) first order error estimates with respect to the time variable in the $H^1$ norm. As in \cite{GR79}, the discretization with respect to the space variable was not considered. 

We also note that the implicit (backward) Euler scheme was also considered in \cite{HRII}. Section ``Time discretization" on page 765 in \cite{HRII} outlines the derivation of an optimal error estimate with respect to both space and time. For the explicit (forward) Euler scheme, an (optimal) first order error estimate with respect to the time variable was proven in \cite{Rannacher}. Higher order schemes for the time discretization of the NSE were analyzed in \cite{Baker1976galerkin,baker1982higher,GR79,HRIV}. 

Thus, although, we are not aware of any reference where estimates \eqref{eq:fem_err_u}--\eqref{eq:fem_err_p} are actually proven, the discussion above already shows that this is possible by assembling the results in \cite{geveci1989convergence,GR79,HRII}. Of course, estimates \eqref{eq:fem_err_u}--\eqref{eq:fem_err_p} have been confirmed by numerous simulations over the years. 
\end{remark}
 For the POD approximation, the following POD inverse estimate was proven in Lemma 2 in \cite{KV01}:
\begin{lemma}
Let $\bphi_i$, $i = 1, \ldots, r$, be POD basis functions, $M_{r}$ be the POD mass matrix with entries $[M_r]_{jk} = (\bphi_k, \bphi_j)$, and   
$S_{r}$ be the POD stiffness matrix with entries $[S_r]_{jk} = [M_r]_{jk} + (\nabla \bphi_k, \nabla \bphi_j)$, where $j, k = 1, \ldots, r$. 
Let $\| \cdot \|_2$ denote the matrix 2-norm.
Then, for all $\bv \in \bX^r$, the following estimates hold:
\begin{eqnarray}
\| \bv \|_{L^2} 
&\leq& \sqrt{\| M_r \|_2 \, \| S_r^{-1} \|_2 } \, \| \bv \|_{H^1} \, ,
\label{lemma_inverse_pod_1} \\
\| \bv \|_{H^1} 
&\leq& \sqrt{\| S_r \|_2 \, \| M_r^{-1} \|_2 } \, \| \bv \|_{L^2} \, .
\label{lemma_inverse_pod_2}
\end{eqnarray}
\label{lemma_inverse_pod}
\end{lemma}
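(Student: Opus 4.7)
The plan is to represent an arbitrary element $\bv \in \bX^r$ in its POD coordinates and reduce both bounds to spectral inequalities for the matrices $M_r$ and $S_r$. Write $\bv = \sum_{j=1}^r c_j \bphi_j$ and let $\mathbf{c} = (c_1,\ldots,c_r)^T \in \R^r$. A direct expansion gives
\begin{equation*}
\|\bv\|_{L^2}^2 = \sum_{j,k=1}^r c_j c_k (\bphi_k,\bphi_j) = \mathbf{c}^T M_r \mathbf{c},
\qquad
\|\bv\|_{H^1}^2 = \mathbf{c}^T S_r \mathbf{c},
\end{equation*}
by the very definitions of $M_r$ and $S_r$ given in the statement.

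Next I would use the standard spectral bounds for symmetric positive definite matrices. Since $M_r$ and $S_r$ are Gram matrices of linearly independent functions, they are symmetric positive definite, and for any $\mathbf{c}\in\R^r$ we have $\mathbf{c}^T M_r \mathbf{c} \leq \|M_r\|_2\,\|\mathbf{c}\|_2^2$ and $\mathbf{c}^T S_r \mathbf{c} \geq \|S_r^{-1}\|_2^{-1}\,\|\mathbf{c}\|_2^2$, with the analogous estimates obtained by swapping the roles of $M_r$ and $S_r$.

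Combining these, to prove \eqref{lemma_inverse_pod_1} I would chain
\begin{equation*}
\|\bv\|_{L^2}^2 \leq \|M_r\|_2 \, \|\mathbf{c}\|_2^2
\leq \|M_r\|_2 \, \|S_r^{-1}\|_2 \, \mathbf{c}^T S_r \mathbf{c}
= \|M_r\|_2\,\|S_r^{-1}\|_2\,\|\bv\|_{H^1}^2,
\end{equation*}
and taking square roots yields the first bound. The second bound \eqref{lemma_inverse_pod_2} follows by the symmetric chain
\begin{equation*}
\|\bv\|_{H^1}^2 \leq \|S_r\|_2 \,\|\mathbf{c}\|_2^2
\leq \|S_r\|_2 \, \|M_r^{-1}\|_2 \, \mathbf{c}^T M_r \mathbf{c}
= \|S_r\|_2\,\|M_r^{-1}\|_2\,\|\bv\|_{L^2}^2.
\end{equation*}

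There is essentially no hard step here: once the coordinate representation is fixed, everything reduces to the Rayleigh quotient bounds for a symmetric positive definite matrix. The only point requiring a brief justification is the positive definiteness of $M_r$ (which follows from $\{\bphi_j\}$ being a basis of $\bX^r$, hence linearly independent) and of $S_r$ (which follows from $(\cdot,\cdot)_{H^1}$ being an inner product on $\bX^r\subset \bH^1_0$). The benefit of stating the result in this general form — rather than exploiting the $L^2$-orthonormality that would make $M_r = I$ — is that the estimate remains valid when the POD inner product $\mathcal{H}$ differs from $L^2$, which is the setting used later in the error analysis.
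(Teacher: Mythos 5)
Your proof is correct: the coordinate representation $\bv=\sum_j c_j\bphi_j$ turns the two norms into the quadratic forms $\mathbf{c}^T M_r\mathbf{c}$ and $\mathbf{c}^T S_r\mathbf{c}$, and the Rayleigh-quotient bounds for symmetric positive definite matrices give exactly \eqref{lemma_inverse_pod_1} and \eqref{lemma_inverse_pod_2}. The paper itself does not prove this lemma but cites Lemma 2 of Kunisch--Volkwein \cite{KV01}, whose proof is essentially the same standard argument you give, so there is nothing further to compare.
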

Note that, since we chose $\mathcal{H}= L^2$ in the POD method, $\| M_r \|_2= \| M_r ^{-1}\|_2= 1$ in inequalities \eqref{lemma_inverse_pod_1}--\eqref{lemma_inverse_pod_2}. 

The $L^2$ norm of the POD projection error is given by \eqref{pod_error_formula} with $\mathcal{H}= L^2$. 
The $H^1$ norm of the POD projection error is given in the following lemma:
\begin{lemma}
The POD projection error in the $H^1$ norm satisfies
\begin{equation}
\frac{1}{M+1}\sum_{\ell= 0}^M
  \Big\| \bu_h(\cdot, t_{\ell}) - 
  \sum_{j=1}^r \left( \bu_h(\cdot, t_{\ell}), \bphi_j(\cdot) \right) \, \bphi_j(\cdot) 
  \Big\|_{1}^2
  = \sum\limits_{j= r+1}^d \|\bphi_j\|_{1}^2\, \lambda_j .
 \label{eq:poderrH1} 
\end{equation}
\label{pr:poderrH1}
\end{lemma}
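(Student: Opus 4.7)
My plan is to mimic the derivation of the $L^2$ error formula \eqref{pod_error_formula} but to work with the $H^1$ inner product only at the very end, after the snapshot-time averaging has already restored an orthogonality structure. The subtlety is that the POD basis $\{\bphi_j\}$ is orthonormal in $L^2$ (the chosen $\mathcal{H}$) but not in $H^1$, so one cannot simply expand $\|\cdot\|_1^2$ into a diagonal sum of squared coefficients; instead, the diagonal structure has to come from averaging over $\ell$.

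Step one: write the projection error pointwise in $\ell$. Since $\{\bphi_j\}_{j=1}^d$ is an $L^2$-orthonormal basis of $\mathcal{R}$ (recall $d$ is the rank of the snapshot set), we have the exact expansion $\bu_h(\cdot,t_\ell)=\sum_{j=1}^d a_{j,\ell}\,\bphi_j$ with $a_{j,\ell}:=(\bu_h(\cdot,t_\ell),\bphi_j)$. Hence the projection error is $\sum_{j=r+1}^d a_{j,\ell}\,\bphi_j$, and
\begin{equation*}
\frac{1}{M+1}\sum_{\ell=0}^M \Big\|\bu_h(\cdot,t_\ell)-\sum_{j=1}^r a_{j,\ell}\,\bphi_j\Big\|_1^2
= \sum_{j,k=r+1}^d (\bphi_j,\bphi_k)_1\,\Big(\frac{1}{M+1}\sum_{\ell=0}^M a_{j,\ell}a_{k,\ell}\Big).
\end{equation*}

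Step two: evaluate the time-averaged coefficient matrix $\frac{1}{M+1}\sum_\ell a_{j,\ell} a_{k,\ell}$ and show it is diagonal with entries $\lambda_j\delta_{jk}$. Using \eqref{pod_basis_formula} and the definition of $K$ on line~\ref{line:snapshot_correlation}, I would compute
\begin{equation*}
a_{j,\ell}=\frac{1}{\sqrt{\lambda_j}}\sum_{m=0}^M (\bz_j)_m\bigl(\bu_h(\cdot,t_\ell),\bu_h(\cdot,t_m)\bigr)=\frac{M+1}{\sqrt{\lambda_j}}\sum_m K_{\ell m}(\bz_j)_m = (M+1)\sqrt{\lambda_j}\,(\bz_j)_\ell,
\end{equation*}
where the last equality uses $K\bz_j=\lambda_j\bz_j$ and the symmetry of $K$. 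The normalization of the eigenvectors is fixed by requiring $\|\bphi_j\|=1$, which, on substituting \eqref{pod_basis_formula} and using $K\bz_j=\lambda_j\bz_j$ once more, forces $(\bz_j,\bz_k)_{\mathbb{R}^{M+1}}=\delta_{jk}/(M+1)$. Combining these two facts gives $\frac{1}{M+1}\sum_\ell a_{j,\ell}a_{k,\ell}=\lambda_j\delta_{jk}$.

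Step three: substitute back. The cross terms $(j\ne k)$ vanish, and what remains is exactly $\sum_{j=r+1}^d\|\bphi_j\|_1^2\,\lambda_j$, which is the claim.

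The main obstacle is really just the bookkeeping in step two — specifically, keeping straight the normalization of the eigenvectors $\bz_j$ versus that of the POD modes $\bphi_j$, and making sure that the factor $(M+1)$ arising from the definition of $K$ (which carries a $\tfrac{1}{M+1}$ prefactor) cancels correctly with the time-average prefactor. Once that accounting is done, the rest is a direct parallel of the standard $L^2$ POD error identity, with the single extra ingredient that the $H^1$ inner product $(\bphi_j,\bphi_k)_1$ is retained rather than replaced by $\delta_{jk}$.
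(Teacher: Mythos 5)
Your proposal is correct, and it has the same overall structure as the paper's proof: both expand the projection error in the discarded modes $\bphi_{r+1},\ldots,\bphi_d$, write the time-averaged $H^1$ norm as a double sum weighted by $(\bphi_j,\bphi_k)_1$, and reduce everything to the identity $\frac{1}{M+1}\sum_{\ell}(\bu_h(\cdot,t_\ell),\bphi_j)\,(\bu_h(\cdot,t_\ell),\bphi_k)=\lambda_j\delta_{jk}$. Where you differ is in how that identity is obtained. The paper gets it in one step from the direct (operator) form of the POD eigenproblem, $\frac{1}{M+1}YY^\intercal\bphi_j=\lambda_j\bphi_j$ (cited as a necessary optimality condition), followed by $L^2$-orthonormality of the modes; you instead stay with the method-of-snapshots eigenproblem $K\bz_j=\lambda_j\bz_j$ and the representation \eqref{pod_basis_formula}, computing $a_{j,\ell}=(M+1)\sqrt{\lambda_j}\,(\bz_j)_\ell$ and deducing the eigenvector normalization $(\bz_j,\bz_k)_{\R^{M+1}}=\delta_{jk}/(M+1)$ from the mode orthonormality constraint. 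Your bookkeeping checks out: $(\bphi_j,\bphi_k)=\frac{M+1}{\sqrt{\lambda_j\lambda_k}}\,\bz_j^\intercal K\bz_k=\frac{(M+1)\lambda_k}{\sqrt{\lambda_j\lambda_k}}\,\bz_j^\intercal\bz_k$, so the POD constraint $(\bphi_j,\bphi_i)=\delta_{ij}$ (not just $\|\bphi_j\|=1$, which handles only the diagonal) indeed forces $\bz_j^\intercal\bz_k=\delta_{jk}/(M+1)$ for $j,k\le d$ where all $\lambda_j>0$, and then $\frac{1}{M+1}\sum_\ell a_{j,\ell}a_{k,\ell}=\lambda_j\delta_{jk}$ as you claim. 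The trade-off: your route is self-contained relative to Section~\ref{sec:pod} (it uses only $K$, \eqref{pod_basis_formula}, and the orthonormality constraint) at the price of the normalization accounting, whereas the paper's route is shorter but invokes the dual eigenproblem for $\frac{1}{M+1}YY^\intercal$, which is only cited from the literature rather than derived in the text.
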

Note that the POD projection error for continuous functions, i.e., the error in the $L^2(0, T; H^1(\Om))$ norm, has been proven in \cite{singler2012new} (Theorem 2, page 17). We consider the POD of a discrete function and derive the time averaged POD projection error in the $H^1$ norm as follows: 
\begin{proof}
Let $Y= \left[\bu_h(\cdot, t_{0}), \bu_h(\cdot, t_{1}), \ldots, \bu_h(\cdot, t_{M})\right]$ be the snapshot matrix. 
A necessary optimality condition of the POD basis is given by the following eigenvalue problem (see, e.g., \cite{kunisch1999}):
\begin{equation}
\frac{1}{M+1}YY^\intercal \bphi_j = \lambda_j \bphi_j. 
\label{eq:pod_eig}
\end{equation}
The POD projection error in the $H^1$ norm satisfies 
{\allowdisplaybreaks
\begin{eqnarray}
&&\frac{1}{M+1}\sum_{\ell= 0}^M
  \Big\| \bu_h(\cdot, t_{\ell}) - 
  \sum_{j=1}^r \left( \bu_h(\cdot, t_{\ell}), \bphi_j(\cdot) \right) \, \bphi_j(\cdot) 
  \Big\|_{1}^2 
  \nonumber \\
&=&  \frac{1}{M+1}\sum_{\ell= 0}^M
 \Big\|\sum_{j=r+1}^d (\bu_h(\cdot, t_{\ell}), \bphi_j) \bphi_j \Big\|_1^2  
 \nonumber \\
 &=&  \frac{1}{M+1}\sum_{\ell= 0}^M 
 \lp \sum_{j=r+1}^d  (\bu_h(\cdot, t_{\ell}), \bphi_j) \bphi_j, \sum_{k=r+1}^d  (\bu_h(\cdot, t_{\ell}), \bphi_k) \bphi_k \rp_1 
 \nonumber \\
  &=&  \frac{1}{M+1}\sum_{\ell= 0}^M  \sum_{j=r+1}^d  \sum_{k=r+1}^d
 (\bu_h(\cdot, t_{\ell}), \bphi_j) (\bu_h(\cdot, t_{\ell}), \bphi_k) (\bphi_j,   \bphi_k)_1
 \nonumber \\
     &=&  \sum_{j=r+1}^d  \sum_{k=r+1}^d
 \lp \frac{1}{M+1}\sum_{\ell= 0}^M \Big( \bu_h(\cdot, t_{\ell}), \bphi_j \Big) \bu_h(\cdot, t_{\ell}), \bphi_k \rp 
 (\bphi_j,   \bphi_k)_1
 \nonumber \\
    &=&  \sum_{j=r+1}^d  \sum_{k=r+1}^d
 \lp \frac{1}{M+1}YY^\intercal \bphi_j, \bphi_k \rp 
 (\bphi_j,   \bphi_k)_1
 \nonumber \\
  &\stackrel{\eqref{eq:pod_eig}}{=}&  \sum_{j=r+1}^d  \sum_{k=r+1}^d
 \lp \lambda_j \bphi_j, \bphi_k \rp 
 (\bphi_j,   \bphi_k)_1
 \nonumber \\
   &=&  \sum_{j=r+1}^d  \sum_{k=r+1}^d
 \lambda_j \delta_{jk} 
 (\bphi_j,   \bphi_k)_1
 = \sum_{j=r+1}^d \lambda_j \|\bphi_j\|_1^2,
\end{eqnarray} 
}
which proves \eqref{eq:poderrH1}. \qed 
\end{proof}
We define the $L^2$ projection of $\bu$, $P_r \bu$, from ${\bf L}^2$ to $\bX^r$ as follows: 
\begin{equation}
\left( \bu- P_r\bu, \bphi_r\right) = 0,\qquad  \forall\, \bphi_r\in \bX^r. 
\label{eq:L2proj}
\end{equation}
We have the following error estimate of the $L^2$ projection:
\begin{lemma}
For any $\bu^k \in \bX$, its $L^2$ projection, $\bw_r^k = P_r \bu^k$, satisfies the following error estimates:
\begin{equation}
	   \frac{1}{M+1} \sum_{k=0}^{M} \left\| \bu^k- \bw_r^k  \right\|^2 
	\leq C \lp h^{2m+2} + \Delta t^2 + \sum _{j = r+1}^{d} \lambda_j \rp ,
\label{eq:error_eta}
\end{equation}
\begin{equation}
	   \frac{1}{M+1} \sum_{k=0}^{M} \left\| \nabla \lp \bu^k- \bw_r^k \rp \right\|^2 
	\leq C \lp h^{2m} + \|S_r\|_2 h^{2m+2}+ (1+\|S_r\|_2 )\Delta t^2 + \sum _{j = r+1}^{d}\|\bphi_j\|_1^2\, \lambda_j  \rp. 
\label{eq:error_geta}
\end{equation}
\label{lemma_proj}
\end{lemma}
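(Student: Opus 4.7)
The plan is to decompose the error via the triangle inequality by inserting the finite element solution $\bu_h^k$ and its $L^2$-projection onto $\bX^r$:
$\bu^k - \bw_r^k = (\bu^k - \bu_h^k) + (\bu_h^k - P_r\bu_h^k) + P_r(\bu_h^k - \bu^k)$.
Each of the three pieces is controlled by a different estimate that is already available in the excerpt: the FE error bound in Assumption \ref{lem:fem} for the first and third pieces, and the POD truncation identities \eqref{pod_error_formula} (in $L^2$) and \eqref{eq:poderrH1} (in $H^1$) for the second piece. I would square, sum over $k=0,\ldots,M$, divide by $M+1$, and collect terms.

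For \eqref{eq:error_eta}, the first piece is handled directly by $\|\bu^k-\bu_h^k\|^2 \leq C(h^{2m+2}+\Delta t^2)$. For the third piece, I use the orthogonality $\|P_r\bv\|\leq\|\bv\|$ to reduce it to the FE $L^2$ error again. For the second piece, note that, because $\{\bphi_j\}$ is $L^2$-orthonormal, the $L^2$-projection $P_r\bu_h^k$ is exactly the POD truncation $\sum_{j=1}^{r}(\bu_h^k,\bphi_j)\bphi_j$, so that after time-averaging equation \eqref{pod_error_formula} with $\cH=L^2$ gives the $\sum_{j=r+1}^d\lambda_j$ contribution. Adding the three pieces yields the stated bound.

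For \eqref{eq:error_geta}, the first piece contributes $C(h^{2m}+\Delta t^2)$ from the gradient part of \eqref{eq:fem_err_u}, and the second piece contributes $\sum_{j=r+1}^d\|\bphi_j\|_1^2\lambda_j$ by Lemma \ref{pr:poderrH1} (again using that $P_r$ acts as the POD truncation on snapshots). The main obstacle is the third piece, $\|\nabla P_r(\bu_h^k-\bu^k)\|$, since $P_r$ is an $L^2$-projection, not an $H^1$-projection, and so there is no immediate inequality $\|\nabla P_r\bv\|\lesssim\|\nabla\bv\|$. To handle it, I would exploit that $P_r(\bu_h^k-\bu^k)\in\bX^r$ and invoke the POD inverse estimate \eqref{lemma_inverse_pod_2}; since $\cH=L^2$ gives $\|M_r^{-1}\|_2=1$, this yields $\|\nabla P_r(\bu_h^k-\bu^k)\|\leq\sqrt{\|S_r\|_2}\,\|\bu_h^k-\bu^k\|$. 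Squaring, combining with the $L^2$ FE bound, and time-averaging then produces the $\|S_r\|_2(h^{2m+2}+\Delta t^2)$ contribution that appears in \eqref{eq:error_geta}. This explains both the $\|S_r\|_2$-weighted terms and why they arise only in the gradient estimate.

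In summary, the argument is a clean triangle-inequality decomposition; the only non-trivial ingredient is trading $H^1$ norm for $L^2$ norm on the discrete space $\bX^r$ via the POD inverse estimate, which injects the $\|S_r\|_2$ factor. All remaining steps are routine: squaring, applying Cauchy–Schwarz to merge cross terms, time-averaging, and gathering common factors into the generic constant $C$.
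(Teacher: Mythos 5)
Your proposal is correct and matches the paper's argument in all essentials: the same splitting through the FE solution $\bu_h^k$ and its POD truncation $P_r\bu_h^k$, with Assumption \ref{lem:fem}, the truncation formulas \eqref{pod_error_formula} and \eqref{eq:poderrH1}, and the inverse estimate \eqref{lemma_inverse_pod_2} (with $\|M_r^{-1}\|_2=1$) supplying exactly the $\|S_r\|_2$-weighted terms in the gradient bound. The only cosmetic difference is in the $L^2$ estimate, where the paper invokes the best-approximation property of the $L^2$ projection with comparator $P_r\bu_h^k$ instead of your equivalent use of the non-expansiveness $\|P_r\bv\|\leq\|\bv\|$ in a three-term triangle inequality.
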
 
\begin{proof}
By the definition of the $L^2$ projection \eqref{eq:L2proj}, we have 
\begin{equation}
\left\| \bu^k - \bw_r^k \right\|^2
= \left( \bu^k - \bw_r^k, \bu^k - \bw_r^k \right) 
\stackrel{ \eqref{eq:L2proj} }{=} \left( \bu^k - \bw_r^k, \bu- \bv_r^k \right), \qquad \forall \, \bv_r^k \in \bX^r. 
\label{lemma_L2_1}
\end{equation}
Using the Cauchy-Schwarz inequality in \eqref{lemma_L2_1}, we get
\begin{equation}
\left\| \bu^k-\bw_r^k \right\| \leq \|\bu^k-\bv_r^k\|, \qquad \forall \bv_r^k \in \bX^r.
\label{lemma_L2_2}
\end{equation}
Decompose $\bu^k - \bv_r^k = (\bu^k - \bu^k_h) + (\bu^k_h-\bv_r^k)$, where $\bu^k_h$ is the corresponding FE approximation. 
Choosing $\bv_r^k = P_r \bu^k_h :=  \sum\limits_{j=1}^r \left( \bu^k_h, \bphi_j \right)\bphi_j $ in \eqref{lemma_L2_2}, by the triangle inequality, Assumption \ref{lem:fem}, and the POD projection error estimate \eqref{pod_error_formula}, we have
\begin{eqnarray}
	   \frac{1}{M+1} \sum_{k=0}^{M} \left\| \bu^k- \bw_r^k \right\|^2 
&\leq& \frac{1}{M+1} \sum_{k=0}^{M} \lp \left\| \bu^k- \bu^k_h \right\| + \left \| \bu^k_h - P_r \bu^k_h \right\| \rp^2 \nonumber \\
&\leq& C \lp h^{2m+2} + \Delta t^2 + \sum _{j = r+1}^{d} \lambda_j \rp,  
\end{eqnarray}
which proves error estimate \eqref{eq:error_eta}. 

Using the triangle inequality,  Assumption \ref{lem:fem}, the POD inverse estimate \eqref{lemma_inverse_pod_2} and Lemma \ref{pr:poderrH1}, we obtain    
\begin{eqnarray}
&& \frac{1}{M+1} \sum_{k=0}^{M} \left\| \nabla \lp \bu^k- \bw_r^k \rp \right\|^2 \nonumber \\
&\leq& \frac{1}{M+1} \sum_{k=0}^{M} \lp \left\| \nabla \lp \bu^k- \bu^k_h \rp \right\| + 
\left \| \nabla \lp \bu^k_h - P_r \bu^k_h \rp \right\| +
\left\|  \nabla \lp P_r \bu^k_h - \bw_r^k \rp \right\| \rp^2 \nonumber \\
&\leq&C \lp h^{2m} +\Delta t^2 + \sum_{j=r+1}^d\|\varphi_j\|_1^2 \, \lambda_j 
+\|S_r\|_2\frac{1}{M+1}\sum_{k=0}^M \left\|P_r \bu^k_h - \bw_r^k\right\|^2 \rp
\nonumber \\
&{\leq}&C \lp h^{2m} +\Delta t^2 + \sum_{j=r+1}^d\|\varphi_j\|_1^2 \, \lambda_j 
+\|S_r\|_2 \frac{1}{M+1}\sum_{k=0}^M\left\|\bu^k_h - \bu^k\right\|^2 \rp 
\qquad (\bw_r^k = P_r\bu^k)
\nonumber \\
&\leq& C \lp h^{2m} + \|S_r\|_2 h^{2m+2}+ (1+\|S_r\|_2 )\Delta t^2 + \sum _{j = r+1}^{d} \|\bphi_j\|_1^2\, \lambda_j  \rp, 
\end{eqnarray}
which proves error estimate \eqref{eq:error_geta}. \qed

\end{proof}
We assume that the following estimates, which are similar to \eqref{eq:error_eta} and \eqref{eq:error_geta}, are also valid:
\begin{assumption} 
For any $\bu^k \in \bX$, its $L^2$ projection, $\bw_r^k = P_r \bu^k$, satisfies the following error estimates:
\begin{eqnarray}
&& \lno  \bu^k - \bw_r^k \rno  \leq
C \, \left(
h^{m+1}
+ \Delta t
+\sqrt{\sum\limits_{j=r+1}^d \lambda_j}\,
\right) \, , 
\label{eq:error_eta_no_sum} \\
&& \lno  \nabla \lp \bu^k - \bw_r^k \rp \rno  \leq
C \, \left(
h^{m}   
+ \sqrt{\|S_r\|_2}h^{m+1}
+ \sqrt{1+\|S_r\|_2}\Delta t
+ \sqrt{\sum _{j = r+1}^{d} \|\bphi_j\|_1^2\, \lambda_j}\,
\right) \, . 
\label{eq:error_geta_no_sum}
\end{eqnarray}
\label{assumption_poderror}
\end{assumption}
\begin{remark}
The assumption that \eqref{eq:error_eta_no_sum} and \eqref{eq:error_geta_no_sum} hold is quite natural.
It simply says that, in the POD truncation error \eqref{pod_error_formula} and \eqref{eq:poderrH1}, no individual term is much larger than the other terms in the sums.

We also mention that estimates \eqref{eq:error_eta_no_sum} and \eqref{eq:error_geta_no_sum} would follow directly from the POD truncation error estimates \eqref{pod_error_formula} and \eqref{eq:poderrH1} if we discarded the $\frac{1}{M + 1}$ factor in those estimates.
This could be accomplished simply by dropping the $\frac{1}{M + 1}$ factor from the snapshot correlation matrix $K$. 
In fact, this approach is used in, e.g., \cite{KV02,volkwein2011model}.
We note, however, that by dropping the $\frac{1}{M + 1}$ from the correlation matrix $K$ would most likely increase the magnitudes of the eigenvalues on the RHS of the POD truncation error estimates \eqref{pod_error_formula} and \eqref{eq:poderrH1}. 
\end{remark}

\begin{lemma}[see Lemma 13 and Lemma 14 in \cite{layton2008introduction}]
\label{lem:b*}
For any functions $\bu, \, \bv, \bw \in \bX$, the skew-symmetric trilinear form $b^*(\cdot, \cdot, \cdot)$ satisfies
\begin{equation}
b^*(\bu, \bv, \bv) = 0,
\label{eq:b_skewsymm}
\end{equation}
\begin{equation} 
b^*(\bu, \bv , \bw)\leq C \|\nabla \bu\| \|\nabla \bv\| \|\nabla \bw\|, 
\label{eq:b_bound_0}
\end{equation}
and a sharper bound 
\begin{equation} 
b^*(\bu, \bv , \bw)\leq C \sqrt{\|\bu\| \|\nabla \bu\|}\|\nabla \bv\| \|\nabla \bw\|. 
\label{eq:b_bound_1}
\end{equation}
\end{lemma}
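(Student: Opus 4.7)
The three claims are classical facts about the skew-symmetric trilinear form appearing in the weak formulation of NSE, and the plan is to verify them directly from the definition $b^*(\bu,\bv,\bw)=\tfrac12[((\bu\cdot\nabla)\bv,\bw)-((\bu\cdot\nabla)\bw,\bv)]$ together with Hölder's inequality, Sobolev embeddings, and (for the sharper estimate) a Gagliardo--Nirenberg / Ladyzhenskaya-type interpolation inequality.

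First, for \eqref{eq:b_skewsymm}, I would simply substitute $\bw=\bv$ into the definition: the two terms $((\bu\cdot\nabla)\bv,\bv)$ are identical, so they cancel, giving $b^*(\bu,\bv,\bv)=0$. No regularity beyond $\bv\in\bX$ is needed once the bilinear form is shown to be finite, which will be a byproduct of the next steps.

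For the coarser bound \eqref{eq:b_bound_0}, I would apply the triangle inequality to the two terms in the definition and then bound each by Hölder with exponents $(4,2,4)$:
\begin{equation*}
|((\bu\cdot\nabla)\bv,\bw)|\leq \|\bu\|_{L^4}\,\|\nabla\bv\|_{L^2}\,\|\bw\|_{L^4}.
\end{equation*}
The Sobolev embedding $\bH^1_0(\Om)\hookrightarrow \bL^4(\Om)$ (valid for $n\leq 3$) together with Poincaré's inequality yields $\|\bu\|_{L^4}\leq C\|\nabla\bu\|$ and $\|\bw\|_{L^4}\leq C\|\nabla\bw\|$. The same estimate applied to the symmetric term $((\bu\cdot\nabla)\bw,\bv)$ completes \eqref{eq:b_bound_0}.

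For the sharper bound \eqref{eq:b_bound_1}, the strategy is to keep the $L^2$--$L^2$ pairing on $\nabla\bv$ and $\bw$ (or $\nabla\bw$ and $\bv$) and to push all interpolation onto $\bu$. In $3$D I would use Hölder with exponents $(3,2,6)$,
\begin{equation*}
|((\bu\cdot\nabla)\bv,\bw)|\leq \|\bu\|_{L^3}\,\|\nabla\bv\|\,\|\bw\|_{L^6},
\end{equation*}
and invoke the Gagliardo--Nirenberg interpolation $\|\bu\|_{L^3}\leq C\|\bu\|^{1/2}\|\nabla\bu\|^{1/2}$ together with the Sobolev embedding $\bH^1_0\hookrightarrow\bL^6$; in $2$D the analogous estimate uses Ladyzhenskaya's inequality $\|\bu\|_{L^4}\leq C\|\bu\|^{1/2}\|\nabla\bu\|^{1/2}$ with Hölder exponents $(4,2,4)$ and Poincaré on the last factor. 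The symmetric term is treated identically, yielding \eqref{eq:b_bound_1}.

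The only delicate point is making sure the interpolation inequalities hold in both $n=2$ and $n=3$ with the same final form on the right-hand side; this is where the uniform treatment is slightly asymmetric between dimensions, and one has to choose the Hölder exponents dimension by dimension. Since Assumption \ref{assumption_regularity} restricts us to $n\in\{2,3\}$ with $\Om$ a bounded Lipschitz domain, both cases are covered by the standard embeddings, and the constant $C$ absorbs Poincaré and embedding constants that depend only on $\Om$. All three bounds therefore follow from entirely standard tools; indeed, they are exactly Lemmas 13 and 14 in \cite{layton2008introduction}.
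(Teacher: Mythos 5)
Your proposal is correct: the paper itself offers no proof of this lemma, simply citing Lemmas 13 and 14 of \cite{layton2008introduction}, and your argument (cancellation of the two identical terms for the skew-symmetry, H\"older with the embedding $\bH^1_0 \hookrightarrow \bL^4$ and Poincar\'e for the first bound, and the Ladyzhenskaya/Gagliardo--Nirenberg interpolation $\|\bu\|_{L^3} \leq C\|\bu\|^{1/2}\|\nabla \bu\|^{1/2}$ in 3D, respectively $\|\bu\|_{L^4} \leq C\|\bu\|^{1/2}\|\nabla \bu\|^{1/2}$ in 2D, for the sharper bound) is precisely the standard proof contained in that reference. No gaps; the dimension-by-dimension choice of H\"older exponents you flag is handled exactly as you describe.
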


We have the following stability result for the $P_R$-VMS-POD-ROM \eqref{eq:vms-pod-be}: 
\begin{lemma}
The solution of \eqref{eq:vms-pod-be} satisfies the following bound:
\begin{equation}
\lno  \bu_r^{M} \rno^2  
+ \nu \Delta t \sNn \lno \nabla \urnp \rno^2
\leq \lno  \bu_r^0 \rno^2 
+ \frac{\Delta t}{\nu} \sNn \lno  {\bff}^{k+1} \rno_{-1}^2 .
\label{stability:c}
\end{equation}
\label{th:stability}
\end{lemma}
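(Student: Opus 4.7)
The plan is to derive the energy estimate by the standard route: test the scheme against the solution, exploit the skew-symmetry of $b^*$ and the positivity of the VMS stabilization term, and close using Young's inequality on the forcing.

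First, I would set $\bphi = \urnp \in \bX^r$ in the discrete formulation \eqref{eq:vms-pod-be}. The time-derivative inner product can be rewritten via the polarization identity
\begin{equation*}
\lp \urnp - \urn , \urnp \rp = \tfrac{1}{2}\lp \lno \urnp \rno^2 - \lno \urn \rno^2 + \lno \urnp - \urn \rno^2 \rp,
\end{equation*}
the viscous term becomes $\nu \lno \nabla \urnp \rno^2$, the trilinear term $b^*(\urnp,\urnp,\urnp)$ vanishes by the skew-symmetry property \eqref{eq:b_skewsymm}, and the VMS term $\alpha \lno \prp \nabla \urnp \rno^2$ is nonnegative and can be dropped from the resulting inequality.

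Next I would handle the right-hand side by duality and Young's inequality:
\begin{equation*}
\lp \bff^{k+1}, \urnp \rp \leq \lno \bff^{k+1} \rno_{-1} \lno \nabla \urnp \rno \leq \frac{1}{2\nu}\lno \bff^{k+1} \rno_{-1}^2 + \frac{\nu}{2}\lno \nabla \urnp \rno^2,
\end{equation*}
so that $\tfrac{\nu}{2}\lno \nabla \urnp \rno^2$ can be absorbed into the viscous term on the left. Multiplying through by $2\Delta t$ yields, for each $k$,
\begin{equation*}
\lno \urnp \rno^2 - \lno \urn \rno^2 + \lno \urnp - \urn \rno^2 + \nu \Delta t \lno \nabla \urnp \rno^2 \leq \frac{\Delta t}{\nu}\lno \bff^{k+1} \rno_{-1}^2.
\end{equation*}
Finally, summing over $k = 0, \ldots, M-1$ telescopes the first two terms to $\lno \urm \rno^2 - \lno \bu_r^0 \rno^2$, and discarding the nonnegative numerical dissipation $\sum_k \lno \urnp - \urn \rno^2$ delivers \eqref{stability:c}.

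There is no real obstacle here; the only point requiring a little care is the splitting constant in Young's inequality, which must be chosen as $\tfrac{\nu}{2}$ so that exactly half of the viscous dissipation is absorbed and half remains on the left, producing the coefficient $\nu \Delta t$ (rather than $\tfrac{\nu}{2}\Delta t$) in the final bound after the factor-of-two cancellation. The argument uses nothing about the specific POD/VMS structure beyond the nonnegativity of the stabilization and the fact that the POD modes satisfy the homogeneous Dirichlet boundary condition that validates the $H^{-1}$ duality bound on the forcing.
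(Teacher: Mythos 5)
Your proposal is correct and follows essentially the same route as the paper: test with $\bphi = \bu_r^{k+1}$, use skew-symmetry of $b^*$ to kill the nonlinearity, drop the nonnegative VMS term, bound the forcing by duality and Young's inequality with the $\nu/2$ split, and sum the telescoping inequality. The only cosmetic difference is that you keep the numerical dissipation $\|\bu_r^{k+1}-\bu_r^k\|^2$ explicitly via the polarization identity before discarding it, whereas the paper drops it implicitly through Cauchy--Schwarz and Young's inequality on $(\bu_r^k,\bu_r^{k+1})$; the resulting bound \eqref{stability:c} is identical.
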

\begin{proof}
Choosing $\bphi:=\urnp$ in \eqref{eq:vms-pod-be} and noting that $b^*(\urnp, \urnp , \urnp) = 0$ by \eqref{eq:b_skewsymm}, we obtain
\begin{equation}
\lp {\urnp-\urn} , \urnp \rp
+ {\nu \Delta t} \lp \D\urnp , \D\urnp \rp
+ \alpha{\Delta t} \, \lp \prp \nabla \urnp , \prp \nabla \urnp\rp 
= {\Delta t}\lp {\bff}^{k+1}, \urnp\rp.  
\label{eq:egy1}
\end{equation}
Using the Cauchy-Schwarz inequality, Young's inequality and the fact that the last term on the LHS of \eqref{eq:egy1} is positive yields
\begin{eqnarray}
\frac{1}{2} \lno  \urnp \rno ^2 - \frac{1}{2} \lno \urn \rno^2 
+ \nu \Delta t \lno \nabla \urnp \rno^2
\leq {\Delta t}\lp {\bff}^{k+1}, \urnp\rp.  
\label{eq:th:stability_1}
\end{eqnarray}
Applying the Cauchy-Schwarz inequality and Young's inequality on the RHS of \eqref{eq:th:stability_1}, 
we get 
\begin{eqnarray}
\frac{1}{2} \lno  \urnp \rno ^2 - \frac{1}{2} \lno \urn \rno^2 
+ \nu \Delta t \lno \nabla \urnp \rno^2
\leq \frac{\Delta t}{2 \nu} \lno {\bff}^{k+1} \rno_{-1}^2 + \frac{\nu \Delta t}{2} \lno \nabla \urnp \rno^2.
\label{stability:1} 
\end{eqnarray}
Then, the stability estimate \eqref{stability:c} follows by summing \eqref{stability:1} from 0 to $M-1$. \qed
\end{proof}

\begin{lemma}
\label{lem:ubounds}
The a priori stability estimate in Lemma~\ref{th:stability} yields the following bounds: 
\begin{equation}
\| {\bf u}_{r}^{k+1} \|^2
\leq \nu^{-1} \, \vertiii {\bf f}_{2,-1}^{2} + \|\bu_r^0\|^2,\qquad \forall\, k=0, \ldots, M-1.
\label{eqn:3.42_rhs_5a}
\end{equation}
\end{lemma}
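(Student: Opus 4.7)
The plan is to derive the pointwise bound directly from the telescoping-sum argument already used in the proof of Lemma~\ref{th:stability}, stopping the summation at an arbitrary intermediate step $k$ instead of carrying it all the way to $M-1$.

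Concretely, I would start from inequality \eqref{stability:1}, which is valid for every $j = 0, \ldots, M-1$. Absorbing the $\frac{\nu \Delta t}{2} \|\nabla \bu_r^{j+1}\|^2$ term on the right into the left-hand side yields
\begin{equation*}
\lno \bu_r^{j+1} \rno^2 - \lno \bu_r^{j} \rno^2 + \nu \Delta t \lno \nabla \bu_r^{j+1} \rno^2 \leq \frac{\Delta t}{\nu} \lno \bff^{j+1} \rno_{-1}^2 .
\end{equation*}
Summing this inequality from $j = 0$ to $j = k$ telescopes the first two terms and gives
\begin{equation*}
\lno \bu_r^{k+1} \rno^2 + \nu \Delta t \sum_{j=0}^{k} \lno \nabla \bu_r^{j+1} \rno^2 \leq \lno \bu_r^{0} \rno^2 + \frac{\Delta t}{\nu} \sum_{j=0}^{k} \lno \bff^{j+1} \rno_{-1}^2 .
\end{equation*}

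Next, I would drop the non-negative viscous term on the left and enlarge the right-hand-side sum from $j = k$ up to $j = M-1$, obtaining
\begin{equation*}
\lno \bu_r^{k+1} \rno^2 \leq \lno \bu_r^{0} \rno^2 + \frac{\Delta t}{\nu} \sum_{j=0}^{M-1} \lno \bff^{j+1} \rno_{-1}^2 .
\end{equation*}
Recognizing that $\Delta t \sum_{j=0}^{M-1} \|\bff^{j+1}\|_{-1}^2 = M \Delta t \cdot \vertiii{\bff}_{2,-1}^2$ by the definition of the triple-bar norm on line~\ref{line:newnorm}, the sum collapses to $(T/\nu)\,\vertiii{\bff}_{2,-1}^2$, which yields the claimed bound once the factor $T$ is absorbed into the generic constant $C$ (see line~\ref{line:C}).

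There is really no serious obstacle here: the result is essentially a ``snapshot'' version of Lemma~\ref{th:stability}, and the only point requiring a bit of care is making sure the terms from the $\alpha$-dependent VMS contribution and the trilinear form drop out cleanly — which they do, exactly as in the proof of Lemma~\ref{th:stability}, because $b^*(\bu_r^{j+1}, \bu_r^{j+1}, \bu_r^{j+1}) = 0$ by \eqref{eq:b_skewsymm} and the EV term $\alpha (\prp \nabla \bu_r^{j+1}, \prp \nabla \bu_r^{j+1})$ is non-negative and can be dropped from the left-hand side.
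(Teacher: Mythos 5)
Your argument is correct and is essentially the route the paper intends: the paper gives no separate proof of this lemma, presenting it as an immediate consequence of Lemma~\ref{th:stability}, and your telescoping of \eqref{stability:1} only up to step $k$ (then dropping the nonnegative viscous and EV terms and enlarging the force sum to $M-1$) is exactly that argument. The one nuance is the factor $M\Delta t = T$ you correctly identify in front of $\nu^{-1}\vertiii{\bff}_{2,-1}^{2}$: the lemma as stated contains no generic constant $C$ into which to absorb it, so strictly the right-hand side should read $\nu^{-1} T \vertiii{\bff}_{2,-1}^{2} + \|\bu_r^0\|^2$ --- a harmless discrepancy (the paper silently drops the $T$), since in the only place \eqref{eqn:3.42_rhs_5a} is used, namely \eqref{eqn:3.42_rhs_5d}, the factor would be swallowed by the generic constants anyway.
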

\subsection{Main Results}\label{sec:step2}
We are ready to derive the main result of this section, which provides the error estimates for the $P_R$-VMS-POD-ROM \eqref{eq:vms-pod-be}.
\begin{theorem}
Under the regularity assumption of the exact solution (Assumption \ref{assumption_regularity}), the assumption on the FE approximation (Assumption \ref{lem:fem}) and the assumption on the POD projection error (Assumption \ref{assumption_poderror}), the solution of the $P_R$-VMS-POD-ROM \eqref{eq:vms-pod-be} satisfies the following error estimate: There exists $\Delta t^*>0$ such that the inequality
\begin{eqnarray}
\label{eq:theorem_error_1}
&& \lno {\bf u}^M - {\bf u}_r^M \rno^2
+ \nu
\Delta t \, \sum \limits_{k = 0}^{M-1} \lno \nabla \lp {\bf u}^{k+1} - {\bf u}_{r}^{k+1}\rp \rno^2
\nonumber \\
&\leq& C
\Bigg( \lp 1+\|S_r\|_2+\|S_R\|_2 \rp \Delta t^2 
+ h^{2m}
+ \lp 1+\|S_r\|_2+\|S_R\|_2 \rp h^{2m+2}
\\ \nonumber
&&+\sum_{j= r+1}^d \lp 1+\|\bphi_j\|_1^2 \rp \, \lambda_j 
 + \sum_{j=  R+1}^d \|\bphi_j\|_1^2 \, \lambda_j
\Bigg)
\end{eqnarray}
holds for all $\Delta t< \Delta t^*$.
\label{theorem_error}
\end{theorem}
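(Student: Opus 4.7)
The plan is to use the standard energy method for POD-ROM error analysis: split the error via the $L^2$ projection $\wrnp := P_r \bu^{k+1}$ as $\bu^{k+1} - \urnp = \etanp + \prnp$ with $\etanp := \bu^{k+1} - \wrnp$ (the projection error, controlled by Lemma \ref{lemma_proj} and Assumption \ref{assumption_poderror}) and $\prnp := \wrnp - \urnp \in \bX^r$ (the ROM-component of the error); derive an error equation for $\prnp$; choose $\prnp$ itself as the test function; and close with the discrete Gronwall inequality. Evaluating \eqref{eq:nse_weak} at $t = t_{k+1}$, approximating $\bu_t$ by backward differences to expose the time-consistency residual $\bsigma^{k+1} = \bu_t(t_{k+1}) - (\bu^{k+1}-\bu^k)/\Delta t$, testing against $\bphi \in \bX^r \subset \bV^h$ (so the pressure term becomes $(p^{k+1} - q_h, \D \cdot \bphi)$ and is bounded using Assumption \ref{lem:fem}), and subtracting \eqref{eq:vms-pod-be} yields an error equation whose right-hand side collects five contributions: the projection error $\etanp$ (and its discrete time derivative), the backward-Euler residual $\bsigma^{k+1}$, the pressure-interpolation term controlled by \eqref{eq:fem_err_p}, the nonlinear discrepancy, and the VMS consistency term $\alpha(\prp \D \bu^{k+1}, \prp \D \bphi)$ that has no continuous counterpart.

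Next, I would set $\bphi = \prnp$. The skew-symmetry of $b^*$ in Lemma \ref{lem:b*} eliminates $b^*(\urnp, \prnp, \prnp)$; the time-derivative inner product produces $\tfrac{1}{2\Delta t}(\lno \prnp \rno^2 - \lno \prn \rno^2)$; both the diffusion and the VMS stabilization contribute nonnegative terms on the left, of which $\nu \lno \D \prnp \rno^2$ is kept for absorption. The nonlinear discrepancy is decomposed as $b^*(\etanp, \bu^{k+1}, \prnp) + b^*(\prnp, \bu^{k+1}, \prnp) + b^*(\urnp, \etanp, \prnp)$, and each piece is bounded using the sharper estimate \eqref{eq:b_bound_1} together with the a priori bound of Lemma \ref{lem:ubounds}, after which Young's inequality is used to absorb small multiples of $\nu \lno \D \prnp \rno^2$ into the left-hand side. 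The VMS consistency term is bounded by Cauchy-Schwarz and Young's inequality as $C\alpha \lno \prp \D \bu^{k+1} \rno^2 + \tfrac{\alpha}{2} \lno \prp \D \prnp \rno^2$, the second piece absorbed; the projection-error quantities $\lno \etanp \rno$, $\lno \D \etanp \rno$, and $\lno (\etanp - \etan)/\Delta t \rno$ are controlled via Assumption \ref{assumption_poderror}, and the time-consistency residual is handled by Taylor expansion of $\bu$ under Assumption \ref{assumption_regularity}.

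Summing over $k = 0, \ldots, M-1$, multiplying by $2\Delta t$, and applying the discrete Gronwall inequality, which requires $\Delta t$ smaller than a threshold $\Delta t^*$ depending on $\nu$ and the bound in Lemma \ref{lem:ubounds}, yields a bound on $\lno \prm \rno^2 + \nu \Delta t \sum_{k=0}^{M-1} \lno \D \prnp \rno^2$. Combining this with the triangle inequality and Assumption \ref{assumption_poderror} to reinstate $\etanp$ produces \eqref{eq:theorem_error_1}. The main obstacle will be the nonlinear terms: one must avoid generating $\lno \D \bu^{k+1} \rno^4$-type Gronwall factors that would blow up the constant, which is precisely why the sharper bound \eqref{eq:b_bound_1} is essential rather than \eqref{eq:b_bound_0}. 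A secondary subtle point is the appearance of the factors $\|S_r\|_2$ and $\|S_R\|_2$ on the right-hand side: the first enters through the POD inverse estimate \eqref{lemma_inverse_pod_2} applied when bounding $\lno \D \etanp \rno$ via Lemma \ref{lemma_proj}, while the second arises when estimating the VMS consistency term after replacing $\prp \D \bu^{k+1}$ by $\prp \D (\bu^{k+1} - \wrnp)$ (since $\prp$ annihilates the $\bL^R$-component) and then using an inverse estimate on $\bX^R$; careful bookkeeping of where each inverse estimate is invoked is what produces the precise structure on the right-hand side of the theorem.
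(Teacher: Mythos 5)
Your plan is essentially the paper's proof: the same splitting $\unp-\urnp=\etanp-\prnp$ via the $L^2$ projection $P_r$, the same error equation with the backward-Euler consistency residual, testing with $\prnp$, the same three-term decomposition of the nonlinearity handled with \eqref{eq:b_bound_1}, \eqref{eq:b_bound_0} and Lemma \ref{lem:ubounds}, the same pressure trick using $\prnp\in\bV^h$, the same treatment of the VMS consistency term via the best approximation in $\bX^R$ (which is indeed where $\|S_R\|_2$ and $\sum_{j=R+1}^d\|\bphi_j\|_1^2\lambda_j$ enter), and closure by the discrete Gronwall lemma and the triangle inequality.

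Two points in your write-up, however, are wrong as stated. First, you list $\lno(\etanp-\etan)/\Delta t\rno$ among the quantities to be ``controlled via Assumption \ref{assumption_poderror}.'' That assumption only gives pointwise-in-time bounds of size $O(h^{m+1}+\Delta t+\sqrt{\sum\lambda_j})$, so dividing a difference of two such terms by $\Delta t$ produces a factor $1/\Delta t$ that destroys the convergence rate; there is no bound on time increments of the projection error available in the paper. The correct (and necessary) observation is that, because $\wrnp=P_r\unp$ is the $L^2$-orthogonal projection onto $\bX^r$ and the test function $\prnp$ lies in $\bX^r$, the terms $(\etanp,\prnp)$ and $(\etan,\prnp)$ vanish identically by \eqref{eq:L2proj}, so no estimate of the discrete time derivative of $\etanp$ is needed. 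Second, your remark that the sharp bound \eqref{eq:b_bound_1} is needed ``to avoid generating $\lno\nabla\unp\rno^4$-type Gronwall factors'' has the logic reversed: applying \eqref{eq:b_bound_1} to $b^*(\prnp,\unp,\prnp)$ and Young's inequality with exponents $4$ and $4/3$ is precisely what produces the Gronwall coefficient $\lno\nabla\unp\rno^4\lno\prnp\rno^2$, which is admissible because Assumption \ref{assumption_regularity} puts $\nabla\bu$ in $L^4(0,T;L^2(\Om))$ and which also determines the threshold $\Delta t^*$ (so $\Delta t^*$ depends on the exact solution's regularity, not on the ROM a priori bound). The reason \eqref{eq:b_bound_0} is inadequate for this particular term is that it would leave $C\lno\nabla\unp\rno\lno\nabla\prnp\rno^2$, which can be neither absorbed into $\nu\lno\nabla\prnp\rno^2$ uniformly nor placed in the Gronwall term. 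With these two corrections your argument coincides with the paper's.
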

\begin{proof}
We start deriving the error bound by splitting the error into two terms: 
\begin{equation}
\unp-\urnp = \lp\unp - \wrnp \rp - \lp \urnp - \wrnp \rp = \etanp - \prnp. 
\label{eq:error}
\end{equation}
The first term, $\etanp=\unp - \wrnp$, represents the difference between $\unp$ and its $L^2$ projection on $\bX^r$, which has been bounded in Lemma~\ref{lemma_proj}. 
The second term, $\prnp$, is the remainder. 

Next, we construct the error equation. We first evaluate the weak formulation of the NSE \eqref{eq:nse_weak} at $t=\tnp$ and 
let $\bv = \bphi_r$, then subtract the $P_R$-VMS-POD-ROM \eqref{eq:vms-pod-be} from it. 
We obtain
\begin{eqnarray}
&&\lp \unp_t , \bphi_r \rp - \lp \frac{\urnp-\urn}{\Delta t} , \bphi_r \rp
+ \nu \lp \D\unp - \D\urnp, \D\bphi_r \rp
+ b^*\lp \unp,\unp,\bphi_r \rp  \nonumber \\ 
&& - b^*\lp \urnp, \urnp, \bphi_r \rp
- \lp p , \nabla \cdot \bphi_r \rp
- \alpha \, \lp \prp \nabla \urnp , \prp \nabla \bphi_r \rp
= 0, \quad
\forall \, \bphi_r \in \bX^r.
\label{eq:error-eqn}
\end{eqnarray}
By subtracting and adding the difference quotient term, $\lp\frac{\unp-\un}{\Delta t}, \bphi_r \rp$, in \eqref{eq:error-eqn}, and applying the decomposition \eqref{eq:error}, we have, for any $\bphi_r \in \bX^r$, 
\begin{eqnarray}
&&\lp \unp_t - \frac{\unp-\un}{\Delta t}, \bphi_r \rp 
+\frac{1}{\Delta t}\lp \etanp - \prnp, \bphi_r \rp
-\frac{1}{\Delta t} \lp \etan - \prn, \bphi_r \rp
+\nu \lp \D\lp \etanp-\prnp\rp , \D\bphi_r \rp  \nonumber \\ 
&&+ b^*\lp \unp,\unp,\bphi_r \rp
- b^* \lp \urnp, \urnp, \bphi_r \rp
 - \lp p , \nabla \cdot \bphi_r \rp
- \alpha \, \lp \prp \nabla \urnp , \prp \nabla \bphi_r \rp
= 0. 
\label{eq:error-eqn2}
\end{eqnarray}
Note that \eqref{eq:L2proj} implies that $\lp \etan, \bphi_r \rp = 0$ and $\lp \etanp, \bphi_r \rp = 0$. 
Choosing $\bphi_r = \prnp$ in \eqref{eq:error-eqn2} and letting $\rn = \utnp - \frac{\unp-\un}{\Delta t}$, 
we obtain
\begin{eqnarray}
&&\frac{1}{\Delta t}\lp \prnp, \prnp\rp  -\frac{1}{\Delta t}\lp \prn, \prnp\rp 
+\nu  \lp \, \D \prnp , \D \prnp  \rp \nonumber \\
&&= \lp \rn, \prnp \rp 
+ \nu \lp \D \etanp , \D \prnp  \rp  
+ b^*\lp \unp,\unp, \prnp\rp  
\nonumber \\ 
&& - b^*\lp \urnp, \urnp, \prnp\rp 
- \lp p , \nabla \cdot \prnp \rp
- \alpha \, \lp \prp \nabla \urnp , \prp \nabla \prnp\rp   \, .
\label{eq:error-eqn3}
\end{eqnarray}
First, we estimate the LHS of \eqref{eq:error-eqn3} by applying the Cauchy-Schwarz inequality and Young's inequality:
\begin{eqnarray}
\text{LHS} &=& \frac{1}{\Delta t}\lno \prnp\rno ^2 -\frac{1}{\Delta t}\lp \prn, \prnp\rp 
+\nu\lno \D \prnp \rno ^2 \nonumber \\
&\geq& \frac{1}{2 \Delta t}\lp\lno \prnp\rno ^2 - \lno \prn\rno ^2\rp+\nu\lno \D \prnp \rno ^2 \, .
\label{eq:error-rhs}
\end{eqnarray} 
Multiplying by $2\Delta t$ both sides of inequality \eqref{eq:error-rhs} and using the result in \eqref{eq:error-eqn3}, we obtain
\begin{eqnarray}
&&\lno \prnp\rno ^2 - \lno \prn\rno ^2
+2\nu\Delta t\lno \D \prnp \rno ^2 \nonumber \\
&\leq& 2\Delta t\lp \rn, \prnp \rp 
+ 2\nu\Delta t \lp \D \etanp , \D \prnp  \rp  
+ 2 \Delta t\, b^*\lp \unp,\unp, \prnp\rp  \nonumber \\ 
&& 
- 2\Delta t\, b^*\lp \urnp, \urnp, \prnp\rp 
- 2\Delta t \, \lp p , \nabla \cdot \prnp \rp
- 2 \alpha\Delta t  \, \lp \prp \nabla \urnp , \prp \nabla \prnp\rp  \, .
\label{eq:ineqn1}
\end{eqnarray} 
Next, we estimate the terms on the RHS of \eqref{eq:ineqn1} one by one.
Using the Cauchy-Schwarz inequality and Young's inequality, we get
\begin{equation}
\lp \rn, \prnp \rp \leq
\lno \rn \rno_{-1} \, \lno \nabla \prnp\rno  \leq
\frac{c_1^{-1}}{4} \lno \rn \rno_{-1}^2 +  c_1 \lno \nabla \prnp\rno ^2\, ,
\label{eq:error-lhs-1} 
\end{equation}
\begin{equation}
\nu \lp \D \etanp , \D \prnp \rp \leq
\nu \lno \D \etanp \rno \, \lno \D \prnp \rno  \leq
 \frac{c_2^{-1}\nu}{4} \lno \D \etanp \rno ^2 
+ c_2\nu \lno \D \prnp \rno ^2\, .
\label{eq:error-lhs-4} 
\end{equation}
The nonlinear terms in \eqref{eq:ineqn1} can be written as follows:
\begin{eqnarray}
& &b^*\lp \unp,\unp, \prnp\rp  - b^*\lp \urnp, \urnp, \prnp\rp  \nonumber \\
&=&b^*\lp \urnp,\etanp-\prnp,\prnp\rp  + b^*\lp \etanp-\prnp,\unp,\prnp\rp \nonumber \\
&=&b^*\lp \urnp,\etanp,\prnp\rp  + b^*\lp \etanp,\unp,\prnp\rp  - b^*\lp \prnp,\unp,\prnp\rp ,
\label{eq:error-lhs-56} 
\end{eqnarray}
where we have used $b^*\lp \unp,\prnp, \prnp\rp = 0$, which follows from \eqref{eq:b_skewsymm}. 
Next, we estimate each term on the RHS of \eqref{eq:error-lhs-56}.
Since ${\bf u}_{r}^{k+1}, \etanp, \prnp \in \bX$, we can apply the standard bounds for the trilinear form $b^*\lp \cdot, \cdot, \cdot \rp$ and use Young's inequality:
\begin{eqnarray}
b^*\lp \urnp,\etanp,\prnp\rp  &\stackrel{\eqref{eq:b_bound_1}}{\leq}&
C \, \lno \urnp\rno^{1/2}  \, \lno \nabla \urnp\rno^{1/2}  \,  
\lno \nabla \etanp \rno  \, \lno \nabla \prnp\rno  \hfill \nonumber \\
&\leq&
\frac{1}{4 c_3}C^2 \, \lno \urnp \rno \,  \lno \nabla \urnp \rno \,  \lno \nabla \etanp\rno ^2 
+  c_3 \lno \nabla \prnp\rno ^2 \, ;
\label{eq:error-lhs-56-1} 
\end{eqnarray}
\begin{eqnarray}
b^*\lp \etanp,\unp,\prnp\rp  &\stackrel{\eqref{eq:b_bound_0}}{\leq}&
C \lno \nabla \etanp\rno \, \lno \nabla \unp\rno  \, \lno \nabla \prnp\rno  \nonumber \\
&\leq&
\frac{1}{4 c_4} C^2  \lno \nabla \unp\rno ^2  \, \lno \nabla \etanp\rno ^2  + c_4 \lno \nabla \prnp\rno ^2 \, ;
\label{eq:error-lhs-56-2} 
\end{eqnarray}
\begin{eqnarray}
b^*\lp \prnp,\unp,\prnp\rp &\stackrel{\eqref{eq:b_bound_1}}{\leq}&
C \lno  \prnp\rno ^{\half}\, \lno \nabla \prnp\rno ^{\half} \, \lno \nabla \unp\rno  \, \lno \nabla \prnp\rno  \nonumber \\
&=& C \lno  \prnp\rno ^{\half} \, \lno \nabla \unp\rno  \, \lno \nabla \prnp\rno ^{\frac{3}{2}} \nonumber \\
&\leq&
C \frac{c_5^{-3}}{4}  \lno \nabla \unp\rno ^4 \, \lno \prnp\rno ^2  + 
C \frac{3 c_5}{4} \lno \nabla \prnp\rno ^2 \, .
\label{eq:error-lhs-56-3} 
\end{eqnarray}
Since $\prnp \in \bX^r \subset \bV^h$, the pressure term on the RHS of \eqref{eq:ineqn1} can be written as 
\begin{equation}
- \lp p , \nabla \cdot \prnp \rp
= - \lp p - q_h, \nabla \cdot \prnp \rp \, ,
\label{eq:error-lhs-pressure_1} 
\end{equation}
where $q_h$ is any function in $Q^h$.
Thus, the pressure term can be estimated as follows by the Cauchy-Schwarz inequality and Young's inequality:
\begin{equation}
- \lp p , \nabla \cdot \prnp \rp
\leq \frac{1}{4 c_6} \, \lno p - q_h \rno^2
+ c_6 \, \lno \nabla \prnp \rno^2 .
\label{eq:error-lhs-pressure_2} 
\end{equation}
The last term on the RHS of \eqref{eq:ineqn1} can be estimated as follows:
{\allowdisplaybreaks
\begin{eqnarray}
&&-\alpha \, \lp \prp \nabla \urnp , \prp \nabla \prnp\rp \nonumber \\
&=& \alpha \lp \prp \nabla \unp - \prp \nabla \urnp, \prp \nabla \prnp \rp 
- \alpha \lp \prp \nabla \unp, \prp \nabla \prnp \rp
\nonumber \\
&=& \alpha \lp \prp \nabla \etanp, \prp \nabla \prnp \rp 
- \alpha \lp \prp \nabla \prnp, \prp \nabla \prnp \rp 
- \alpha \lp \prp \nabla \unp, \prp \nabla \prnp \rp
\nonumber \\
&\leq&
\alpha \lno \prp \nabla \etanp \rno  \cdot \lno  \prp \nabla \prnp \rno
- \alpha \lno \prp \nabla \prnp\rno ^2
- \alpha \lp \prp \nabla \unp, \prp \nabla \prnp \rp  \nonumber \\
&\leq&
\alpha \lp \lno \prp \nabla \etanp \rno^2 + \frac{1}{4} \lno  \prp \nabla \prnp \rno^2 \rp
- \alpha \lno \prp \nabla \prnp\rno ^2
+ \alpha \lp \lno \prp \nabla \unp \rno^2 + \frac{1}{4} \lno \prp \nabla \prnp \rno^2 \rp  \nonumber \\
&\leq& 
\alpha \lno \prp \nabla \etanp \rno ^2
- \frac{\alpha}{2} \lno \prp \nabla \prnp\rno ^2 
+ \alpha \lno \prp \nabla \unp \rno^2 \, .
\label{eq:error-lhs-7} 
\end{eqnarray}
}
Note that, since $P_R$ is the $L^2$ projection of ${\bL}^2$ on $\bL^R$, we get
\begin{eqnarray*}
\lno \prp \nabla \etanp \rno  \leq \lno \nabla \etanp \rno  \, .
\end{eqnarray*}
Choosing $c_1=c_3=c_4=c_6=\frac{\nu}{12}$, $c_2 = \frac{1}{12}$ and $c_5=\frac{\nu}{9C}$, 
then substituting the above inequalities in \eqref{eq:ineqn1}, we obtain
\begin{eqnarray}
&&\lno \prnp\rno ^2 - \lno \prn\rno ^2 + 
 \nu \Delta t \lno \D \prnp \rno ^2 + \alpha  \Delta t \lno \prp \nabla \prnp\rno^2
\nonumber \\
&\leq& 
\frac{6\Delta t}{\nu} \lno \rn \rno_{-1}^2 
+ 6\nu \Delta t \lno \nabla \etanp \rno ^2          
+ \frac{6 \Delta t }{\nu} C^2 \lno \urnp \rno \, \lno \D \urnp \rno \,  \lno \nabla \etanp \rno^2
+ \frac{6 \Delta t}{\nu} C^2 \lno \nabla \unp \rno^2  \, \lno \nabla \etanp \rno^2  
\nonumber \\
&+& \frac{C^4 9^3 \nu^{-3} \Delta t }{2} \lno \nabla \unp\rno ^4 \, \lno \prnp\rno ^2
+ \frac{6 \Delta t}{\nu} \, \lno p - q_h \rno^2
+ 2 \alpha \Delta t \lno \nabla \etanp \rno^2
+ 2 \alpha \Delta t \lno \prp \nabla \unp \rno^2 \, .
\label{eq:error-eqn4}
\end{eqnarray}
Summing \eqref{eq:error-eqn4} from $k=0$ to $k=M-1$, we have 
\begin{eqnarray}
&&\lno \prm \rno^2 + \nu \Delta t \sNn \lno \D \prnp \rno ^2 + \alpha  \Delta t \sNn \lno \prp \nabla \prnp\rno^2
\nonumber \\
&\leq&  \lno \prz \rno^2 + \frac{6\Delta t}{\nu} \sNn \lno \rn \rno_{-1}^2 
+ 6\nu \Delta t \sNn \lno \nabla \etanp \rno ^2        
+ \frac{6 \Delta t }{\nu} C^2 \sNn  \lno \urnp \rno \, \lno \D \urnp \rno \,  \lno \nabla \etanp \rno^2 \nonumber \\
&+&\frac{6 \Delta t}{\nu} C^2 \sNn \lno \nabla \unp \rno^2  \, \lno \nabla \etanp \rno^2 
+\frac{C^4 9^3 \nu^{-3} \Delta t }{2} \sNn \lno \nabla \unp\rno ^4 \, \lno \prnp\rno ^2 \nonumber \\
&+& \frac{6 \Delta t}{\nu} \, \sNn \lno p - q_h \rno^2 
+ 2 \alpha \Delta t \sNn \lno \nabla \etanp \rno^2
+ 2 \alpha \Delta t \sNn \lno \prp \nabla \unp \rno^2 \, . 
\label{eqn:error-eqn5}
\end{eqnarray}
Next, we estimate each term on the RHS of \eqref{eqn:error-eqn5}.

The first term vanishes since $\bu_r^0 = \bw_r^0$ (see \eqref{eq:ic}). 

By using the Poincar\'{e}-Friedrichs' inequality, the second term on the RHS of \eqref{eqn:error-eqn5} can be estimated as follows (see, e.g., \cite{wang2011variational}): 
\begin{equation}
\Delta t\, \sNn \lno \rn \rno_{-1}^2 
\leq C\Delta t\, \sNn \lno \rn \rno^2 
\leq C\Delta t^2 \, \lno \bu_{tt} \rno^{2}_{2,2} \, .
\label{eq:proof1}
\end{equation}
Using \eqref{eq:error_geta}, the third and eighth terms on the RHS of \eqref{eqn:error-eqn5} can be estimated as follows:
\begin{eqnarray}
\Delta t \sum \limits_{k=0}^{M-1} \| \nabla \eta^{k+1}\|^2
\leq C \, \left(
h^{2m}   
+ \|S_r\|_2 h^{2m+2}
+ \lp 1+\|S_r\|_2 \rp \Delta t^2
+ \sum\limits_{j=r+1}^d \|\bphi_j\|_1^2\, \lambda_j\,
\right).
\label{eqn:3.42_rhs_4_and_8}
\end{eqnarray}
To estimate the fourth term on the RHS of \eqref{eqn:error-eqn5}, we use Lemma~\ref{lem:ubounds}
\begin{eqnarray}
&& \Delta t \sum \limits_{k=0}^{M-1} \lno {\bf u}_{r}^{k+1} \rno \, \lno \nabla {\bf u}_{r}^{k+1} \rno \, \lno \nabla \eta^{k+1} \rno^2
\nonumber \\
&\stackrel{\eqref{eqn:3.42_rhs_5a}}{\leq}& \left( \nu^{-1/2} \, \vertiii {\bf f}_{2,-1} + \|\bu_r^0\| \right) \, 
\Delta t \sum \limits_{k=0}^{M-1} \lno \nabla {\bf u}_{r}^{k+1} \rno \, \lno \nabla \eta^{k+1} \rno^2 
\nonumber \\
&\stackrel{\eqref{eq:error_geta_no_sum}}{\leq}& C \, \left( \nu^{-1/2} \, \vertiii {\bf f}_{2,-1} + \|\bu_r^0\| \right) \, 
\Delta t \sum \limits_{k=0}^{M-1} \lno \nabla {\bf u}_{r}^{k+1} \rno \, 
\Big(
h^{2m}   
+ \|S_r\|_2 h^{2m+2}
+ \lp 1+\|S_r\|_2 \rp \Delta t^2 \nonumber \\
&&\hspace{2cm}+ \sum\limits_{j=r+1}^d \|\bphi_j\|_1^2\, \lambda_j\,
\Big).
\label{eqn:3.42_rhs_5d}
\end{eqnarray}
We note that we used estimate \eqref{eq:error_geta_no_sum} in the derivation of \eqref{eqn:3.42_rhs_5d};
using \eqref{eq:error_geta} would not have been enough for the asymptotic convergence of \eqref{eqn:3.42_rhs_5d}.

The fifth term on the RHS of \eqref{eqn:error-eqn5} can be bounded as follows:
\begin{eqnarray}
&& \Delta t \sum \limits_{k=0}^{M-1} \lno \nabla {\bf u}^{k+1} \rno^2 \, \lno \nabla \eta^{k+1} \rno^{2}
\nonumber \\
&\stackrel{\eqref{eq:error_geta_no_sum}}{\leq}&
C \, \Delta t \sum \limits_{k=0}^{M-1} \lno \nabla {\bf u}^{k+1} \rno^2 \, 
\Big(
h^{2m}   
+ \|S_r\|_2 h^{2m+2}
+ \lp 1+\|S_r\|_2 \rp \Delta t^2
+ \sum\limits_{j=r+1}^d \|\bphi_j\|_1^2\, \lambda_j\,
\Big). 
\label{eqn:3.42_rhs_6}
\end{eqnarray}

The seventh term on the RHS of \eqref{eqn:error-eqn5} has been bounded by the approximation property \eqref{eq:fem_err_p} in Assumption~\ref{lem:fem}.  

Using \eqref{eq:LR}, we have the following error bound of the last term on the RHS of \eqref{eqn:error-eqn5}: 
\begin{eqnarray}
&& \Delta t \, \sNn \lno \prp \nabla \unp \rno^2  
=  \Delta t \, \sNn \lno \nabla \unp - \pr \nabla \unp \rno^2   \nonumber \\
&&  \stackrel{}{\leq}  C \, \asNn 
\inf_{\bv_R \in \bX^R} \lno \nabla \unp  - \nabla \bv_R \rno^2
\leq C \,  \asNn \lno \nabla \unp  - \nabla \wRnp \rno^2
\nonumber \\ 
&&\stackrel{\eqref{eq:error_geta}}{\leq} C \lp h^{2m}   
+ \|S_R\|_2 h^{2m+2}
+ \lp 1+\|S_R\|_2 \rp \Delta t^2
+ \sum\limits_{j=R+1}^d \|\bphi_j\|_1^2\, \lambda_j\,
 \rp. 
\label{eq:proof4}
\end{eqnarray}

Collecting \eqref{eq:proof1}-\eqref{eq:proof4} and letting
$d = C(6\nu + 2\alpha)+6 C^3 \nu^{-1} (\nu^{-\frac{1}{2}} \vertiii{f}_{2, -1} +\|\bu^0_r\|) \vertiii{\nabla \bu_r}_{1, 0} + 6C^3\nu^{-1}\vertiii{\nabla \bu_r}_{2, 0}^2$,
$d_1 =  \frac{C^4 9^3 \nu^{-3}}{2}$,
$d_2 = 6 C \nu^{-1} (\lno \bu_{tt} \rno^{2}_{2,2}+1) + 2C \alpha +d$,
$d_3 = 6 C \nu^{-1} + 2C \alpha +d$, and 
$d_4 = 2\alpha C$,
equation \eqref{eqn:error-eqn5} becomes

\begin{eqnarray}
&& \lno \prm \rno^2
+\nu \Delta t \, \sum \limits_{k = 0}^{M-1} \lno \nabla \phi_{r}^{k+1} \rno^2
+ \alpha  \Delta t \sNn \lno \prp \nabla \prnp\rno^2
\nonumber \\
&\leq& d_1 \Delta t \sNn \lno \nabla \unp\rno ^4 \, \lno \prnp\rno ^2 + d_2 \Delta t^2 
+ d \|S_r\|_2 \Delta t^2 
+ d_3 h^{2m}
+ d \|S_r\|_2 h^{2m+2}
 \nonumber \\
&+& d\sum_{j= r+1}^d \|\bphi_j\|_1^2 \, \lambda_j +
d_4 \lp \|S_R\|_2 \Delta t^2 +  \|S_R\|_2 h^{2m+2} + \sum_{j=  R+1}^d \|\bphi_j\|_1^2 \, \lambda_j \rp.
\label{eq:proof5}
\end{eqnarray}

If $\Delta t< \Delta t^*:= d_1\vertiii{\nabla \bu}^4_{4, 0}$, the discrete Gronwall lemma (see Lemma 27 in \cite{layton2008introduction} and also \cite{HRIV}) implies the following inequality: 
\begin{eqnarray}
&& \lno \prm \rno^2
+ \nu
\Delta t \, \sum \limits_{k = 0}^{M-1} \lno \nabla \phi_{r}^{k+1} \rno^2
+ \alpha  \Delta t \sNn \lno \prp \nabla \prnp\rno^2
\nonumber \\
&\leq& C^*
\Bigg( 
d_2 \Delta t^2 
+ d \|S_r\|_2 \Delta t^2 
+ d_3 h^{2m}
+ d \|S_r\|_2 h^{2m+2}
+d\sum_{j= r+1}^d \|\bphi_j\|_1^2 \, \lambda_j 
 \nonumber  \\
&+& 
d_4 \Big( \|S_R\|_2 \Delta t^2 +  \|S_R\|_2 h^{2m+2} + \sum_{j=  R+1}^d \|\bphi_j\|_1^2 \, \lambda_j \Big)
\Bigg),
\label{eq:proof5}
\end{eqnarray}
where $C^* = e^{d_1 \Delta t \sNn \lno \nabla \unp\rno ^4 }$. 

By dropping the third term on the LHS of \eqref{eq:proof5} and using \eqref{eq:error_eta_no_sum}, \eqref{eq:error_geta}, and the triangle inequality, we get 
\begin{eqnarray}
&& \lno {\bf u}^M - {\bf u}_r^M \rno^2
+ \nu
\Delta t \, \sum \limits_{k = 0}^{M-1} \lno \nabla \lp {\bf u}^{k+1} - {\bf u}_{r}^{k+1}\rp \rno^2
\nonumber \\
&\leq& C
\Bigg( \lp 1+\|S_r\|_2+\|S_R\|_2 \rp \Delta t^2 
+ h^{2m}
+ \lp 1+\|S_r\|_2+\|S_R\|_2 \rp h^{2m+2}
\\ \nonumber
&&+\sum_{j= r+1}^d \lp 1+\|\bphi_j\|_1^2 \rp \, \lambda_j 
 + \sum_{j=  R+1}^d \|\bphi_j\|_1^2 \, \lambda_j
\Bigg).
\end{eqnarray}
This completes the proof. 
\qed
 
\end{proof}

\section{Numerical Experiments}\label{num}
%
%
%
The goal of this section is twofold.
In Section \ref{ss_numerical_results_3D}, we investigate the physical accuracy of the $P_R$-VMS-POD-ROM.
To this end, we test the model in the numerical simulation of a 3D flow past a circular cylinder at $\mbox{Re} = 1000$.
The $P_R$-VMS-POD-ROM is compared with the POD-G-ROM and the ML-POD-ROM in which a constant EV is employed \cite{Aubry1988,wang2011closure}.
All the numerical results are benchmarked against DNS data.
A parallel CFD solver is employed to generate the DNS data \cite{Akhtar2008C}.   
For details on the numerical discretization, the reader is referred to the appendix in \cite{wang2011two}. 
To assess the physical accuracy of the the POD-ROMs, two criteria are employed:
(i) the time evolution of the POD coefficients, which measures the instantaneous behavior of the models; and 
(ii) the energy spectrum, which measures the average behavior of the models.
In Section \ref{ss_numerical_results_2D}, we illustrate numerically the theoretical error estimates in Theorem \ref{theorem_error}.
Specifically, we investigate the error's asymptotic behavior with respect to the time step, $\Delta t$, and the POD contribution to the error introduced by the EV term, $\sum \limits_{j=R+1}^{d} \|\bphi_j\|_1^2\lambda_j$.


\subsection{Physical Accuracy}
\label{ss_numerical_results_3D}

In this section, we test the $P_R$-VMS-POD-ROM in the numerical simulation of a 3D flow past a circular cylinder at $\mbox{Re} = 1000$.
By using the method of snapshots \cite{Sir87a}, we compute the POD basis $\{ {\boldsymbol \varphi}_1, \cdots, {\boldsymbol \varphi}_d \}$ from 1000 snapshots of the velocity field over 15 shedding cycles, i.e., $t\in [0, 75]$ (see Figure \ref{fig_3d_dns}). 
\begin{figure}[h!]
\centering
\includegraphics[width=0.4\textwidth]{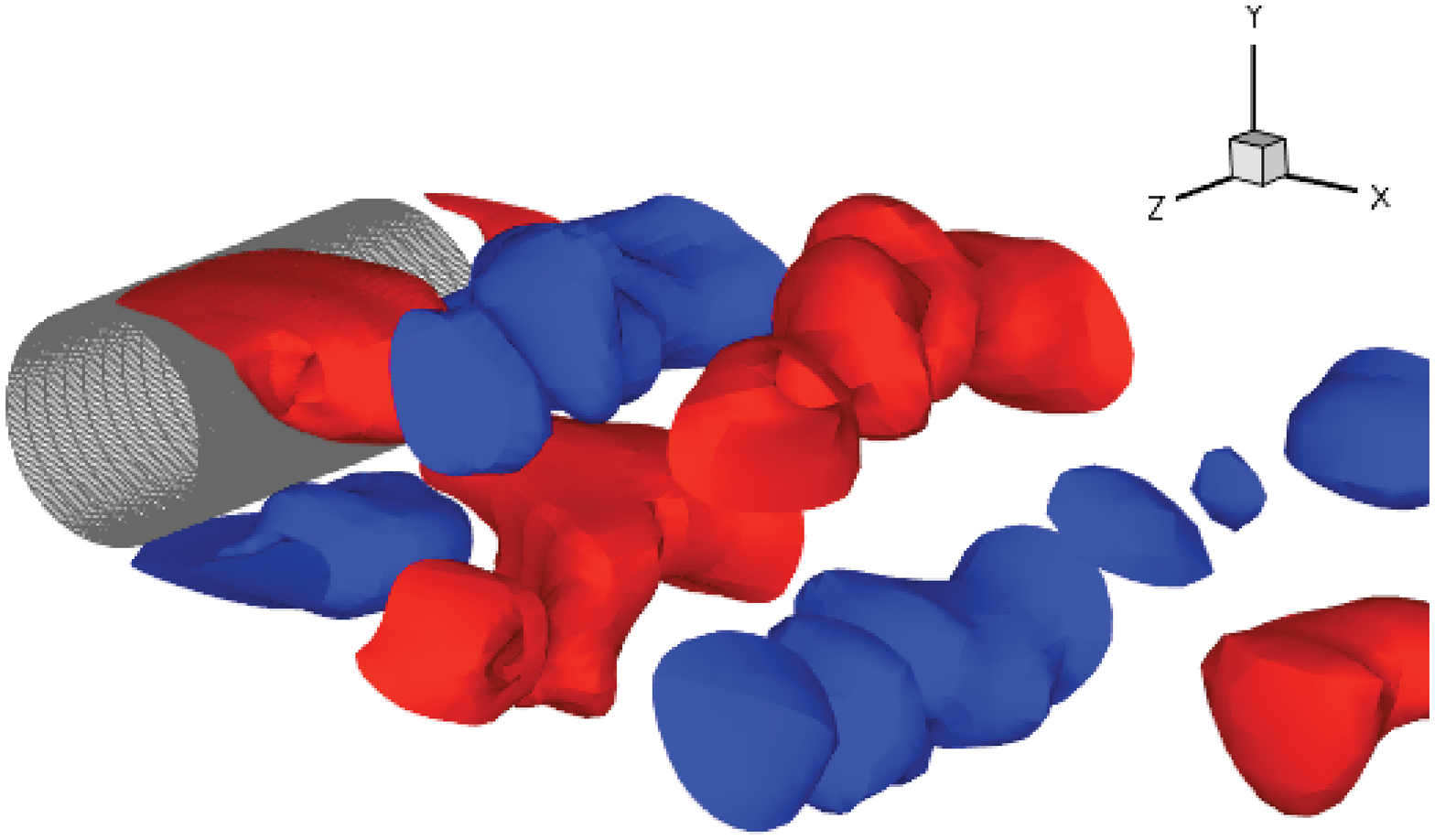}\hspace*{0.5cm}
\includegraphics[width=0.4\textwidth]{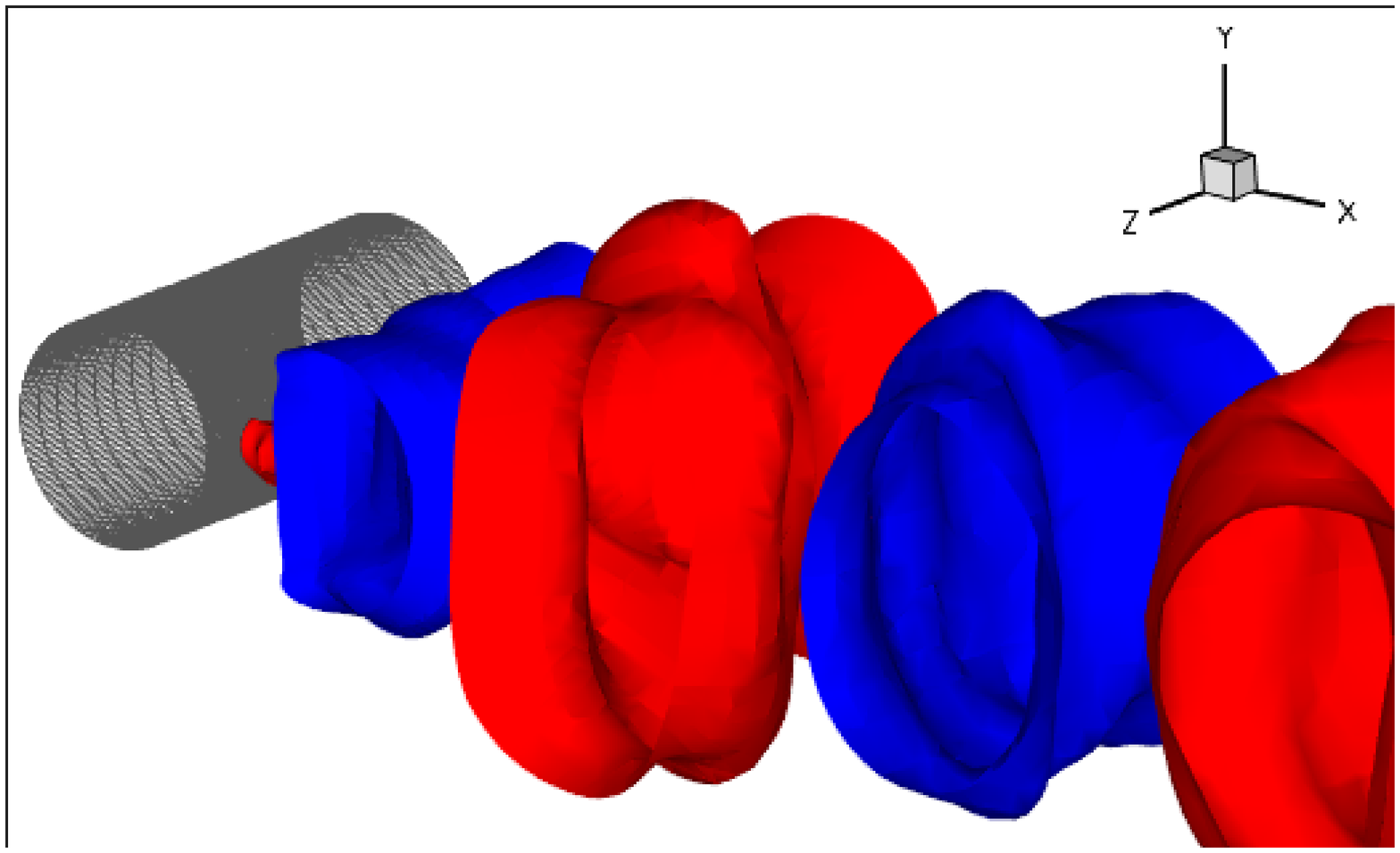}\vspace*{0.5cm}
\includegraphics[width=0.4\textwidth]{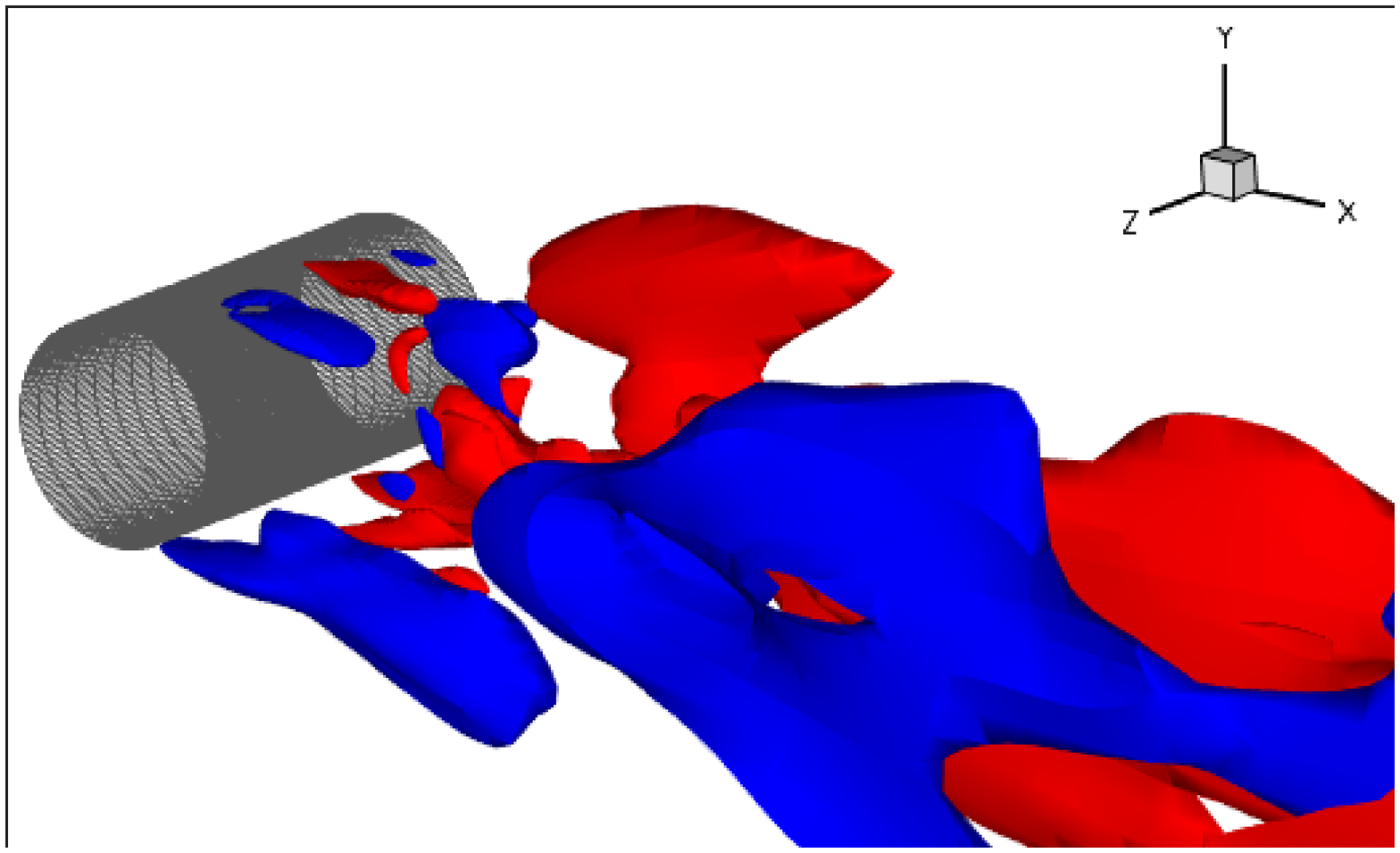}\hspace*{0.5cm}
\includegraphics[width=0.4\textwidth]{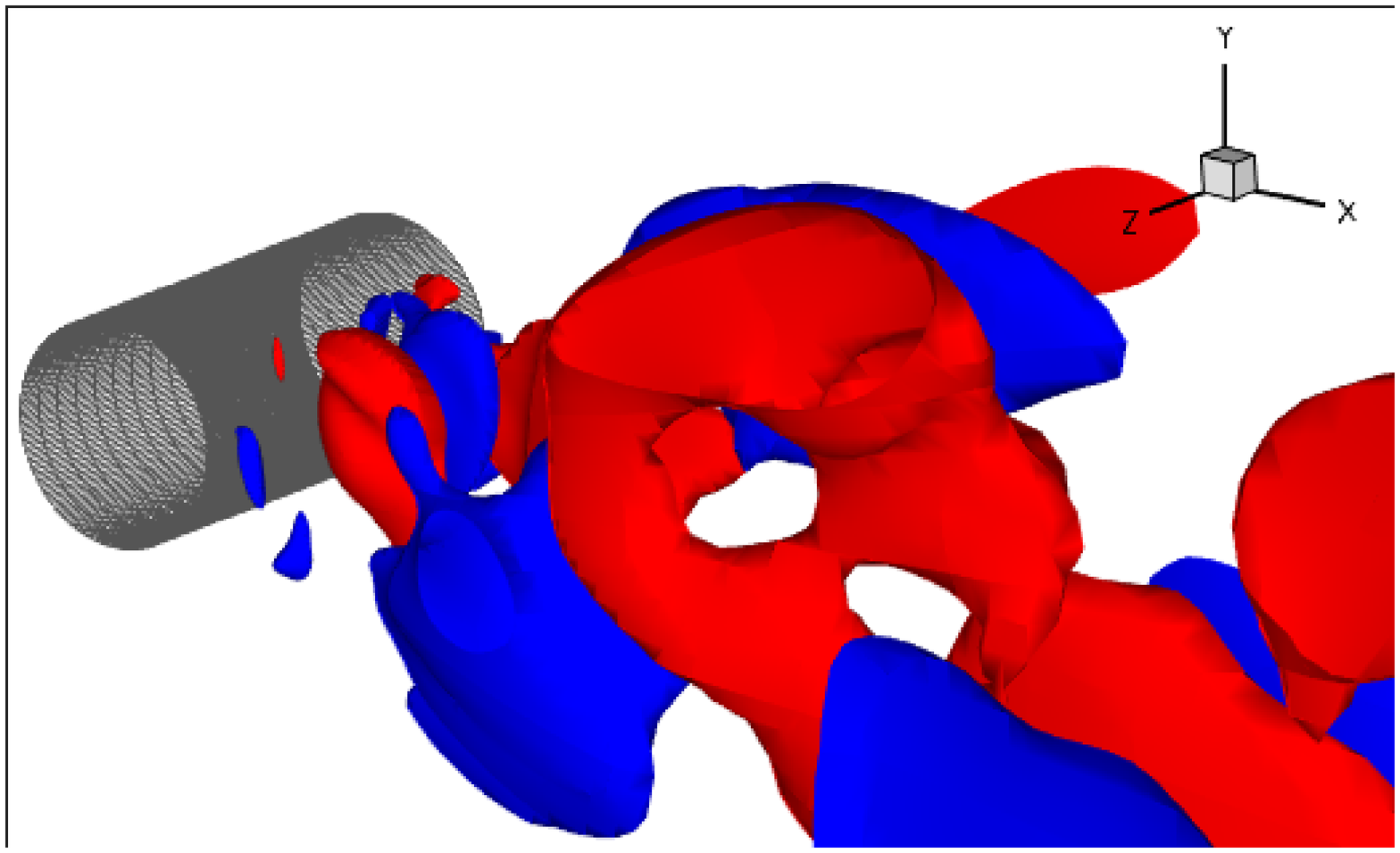}
\caption{
3D flow past a cylinder at $\mbox{Re}=1000$.
First streamwise POD mode (top left), 
first normal POD mode (top right),
third streamwise POD mode (bottom left), and
third normal POD mode (bottom right).
\label{fig_3d_dns}}
\end{figure}
These POD modes are then interpolated onto a structured quadratic FE mesh with nodes coinciding with the nodes used in the original DNS finite volume discretization. Six POD basis functions (r=6) are then used in all POD-ROMs that we investigate next.
These first six POD modes capture $84\%$ of the system's energy.
For all these POD-ROMs, the time discretization was effected by using the backward Euler method with $\Delta t= 7.5\times 10^{-3} $. 
We emphasize that the time interval used in the simulations of POD-ROMs is {\it four times larger} than that in which snapshots are generated, i.e., $t\in [0, 300]$. 
Thus, the {\em predictive} capabilities of the POD-ROMs are investigated. 
In Figure \ref{fig_3d_evo}, the time evolutions of the POD coefficients $a_1$ and $a_4$ are plotted.
The other POD coefficients have a similar qualitative behavior, so, for clarity, they are not included in our plots.
To determine the EV constants in the ML-POD-ROM and the $P_R$-VMS-POD-ROM, we run the models on the short time interval $[0,15]$ with several different values for the EV constants and choose the value that yields the results that are closest to the DNS results. 
This approach yields the following values for the EV constants: 
$\alpha= 3 \times 10^{-3}$ for the ML-POD-ROM \eqref{eq:ml-pod} and 
$\alpha=3.5\times 10^{-3}$ for the $P_R$-VMS-POD-ROM \eqref{eq:vms-pod} when $R=1$.
We emphasize that these EV constant values are optimal only on the short time interval tested, 
and they might actually be nonoptimal on the entire time interval $[0, 300]$ on which the POD-ROMs are tested.
Thus, this heuristic procedure ensures some fairness in the numerical comparison of the three POD-ROMs.

\begin{figure}[h!]
\centering
\begin{minipage}[c]{.02\linewidth}
{\small (a)}
\end{minipage}
\hspace*{0.05cm}
\begin{minipage}[c]{.47\linewidth}
\includegraphics[width=1\textwidth]{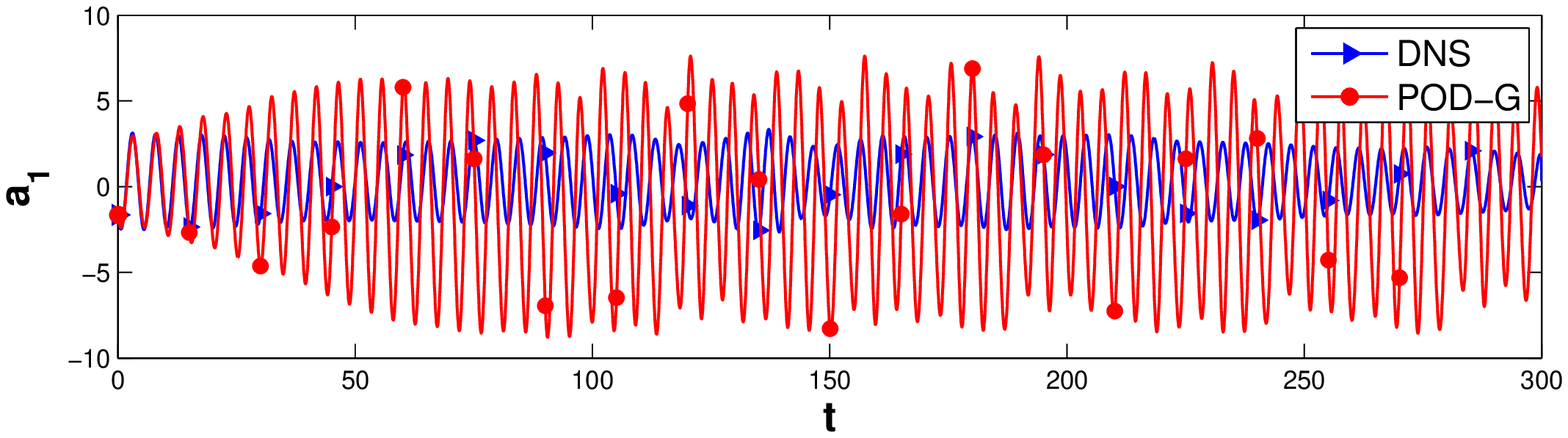}
\end{minipage}
\hspace*{0.05cm}
\begin{minipage}[c]{.47\linewidth}
\includegraphics[width=1\textwidth]{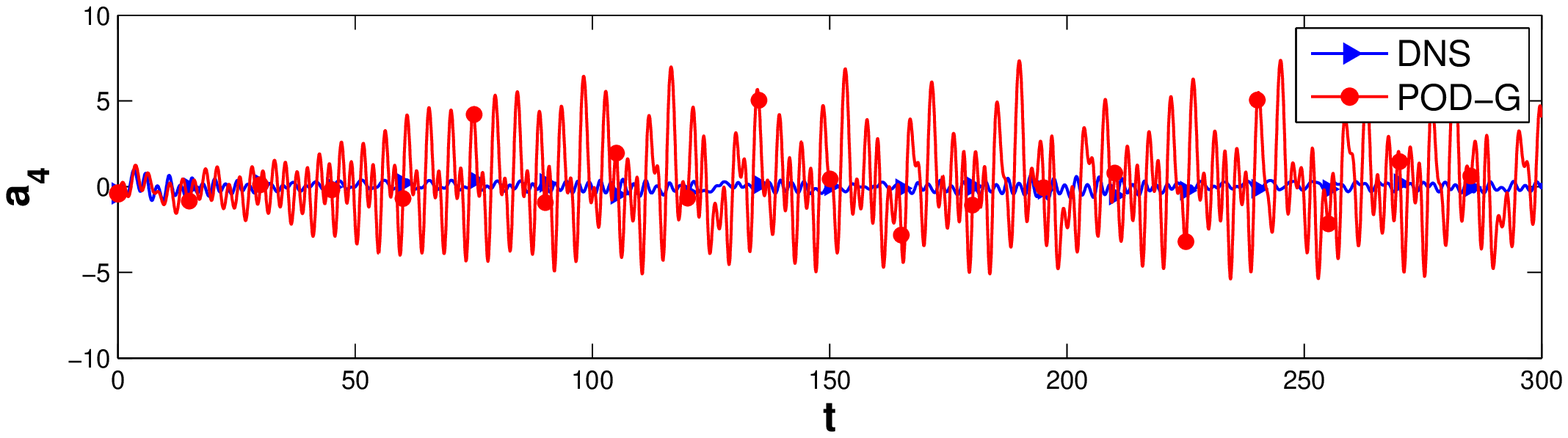}
\end{minipage}
\centering
\begin{minipage}[c]{.02\linewidth}
{\small (b)}
\end{minipage}
\hspace*{0.05cm}
\begin{minipage}[c]{.47\linewidth}
\includegraphics[width=1\textwidth]{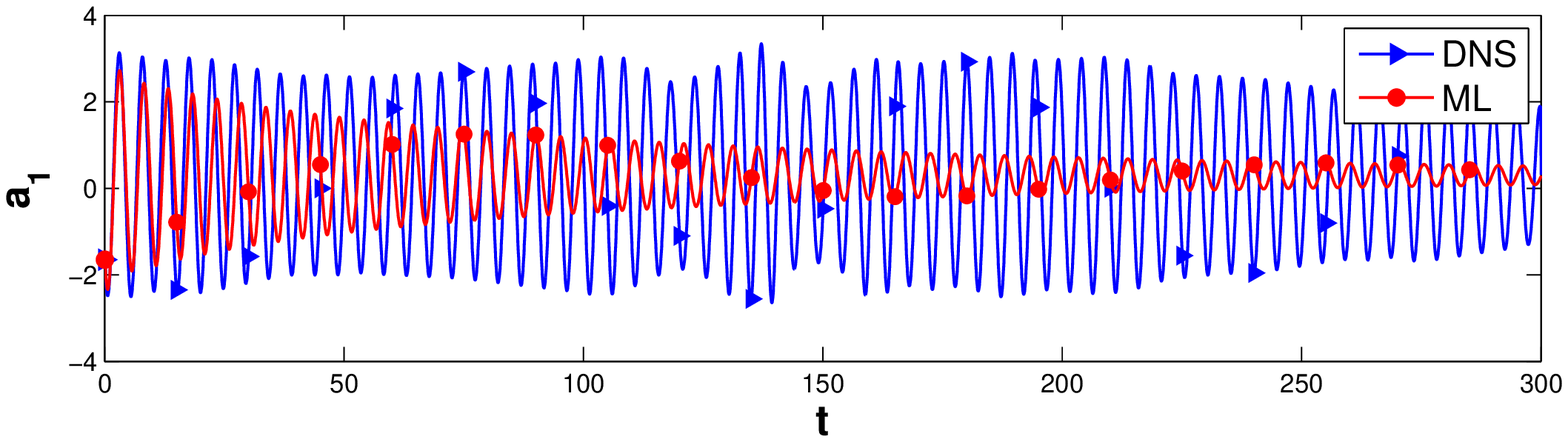}
\end{minipage}
\hspace*{0.05cm}
\begin{minipage}[c]{.47\linewidth}
\includegraphics[width=1\textwidth]{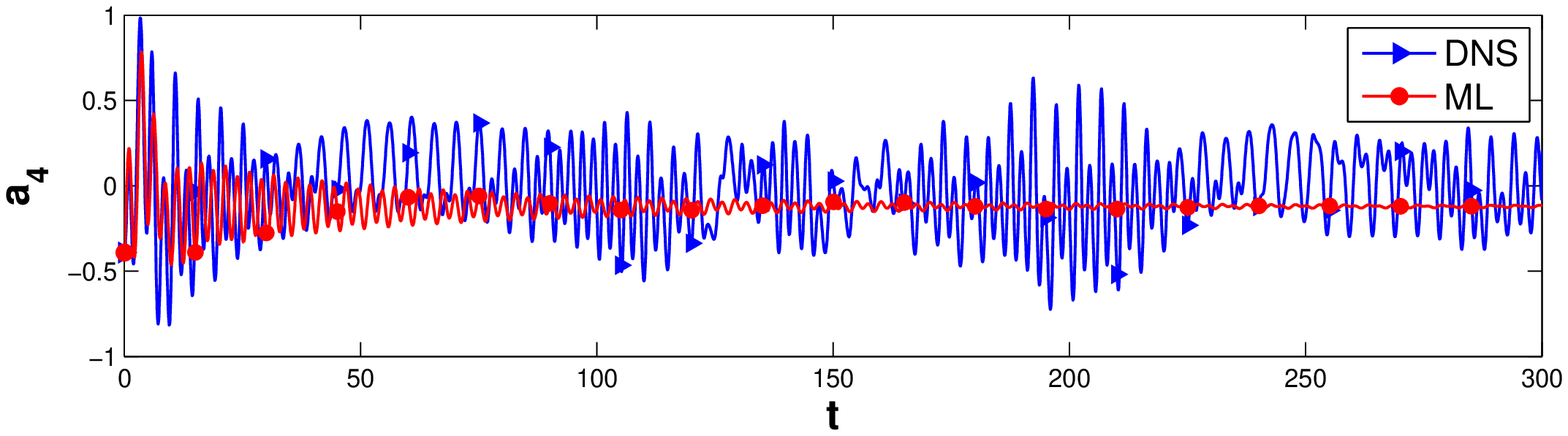}
\end{minipage}
\centering
\begin{minipage}[c]{.02\linewidth}
{\small (c)}
\end{minipage}
\hspace*{0.05cm}
\begin{minipage}[c]{.47\linewidth}
\includegraphics[width=1\textwidth]{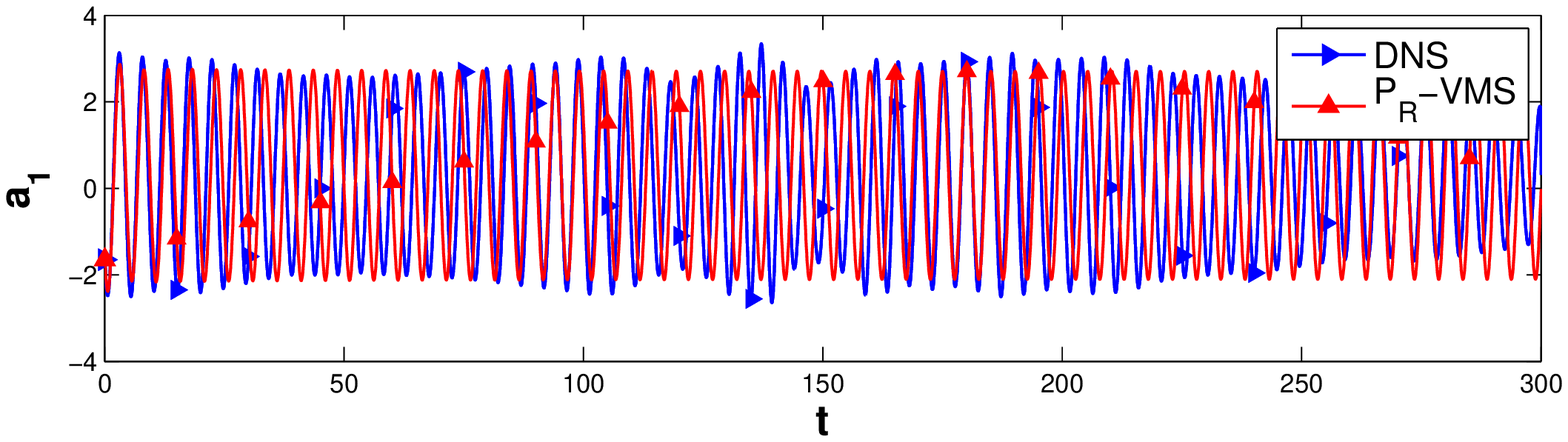}
\end{minipage}
\hspace*{0.05cm}
\begin{minipage}[c]{.47\linewidth}
\includegraphics[width=1\textwidth]{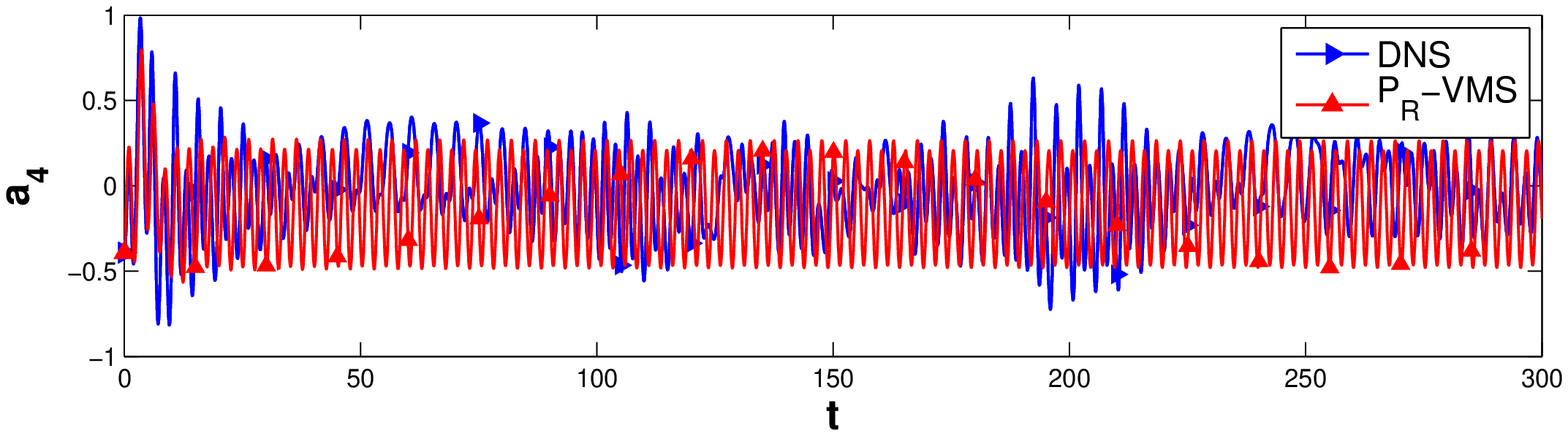}
\end{minipage}
\caption{
3D flow past a cylinder at $\mbox{Re}=1000$.
Temporal evolution of POD coefficients $a_1(\cdot)$ and $a_4(\cdot)$ over the time interval $[0, 300]$ for POD-G-ROM (a), ML-POD-ROM (b), and $P_R$-VMS-POD-ROM (c). 
\label{fig_3d_evo}
}
\end{figure}

\begin{figure}[h!]
\centering
\begin{minipage}[c]{.02\linewidth}
{\small (a)}
\end{minipage}
\hspace*{0.05cm}
\begin{minipage}[c]{.45\linewidth}
\includegraphics[width=1\textwidth]{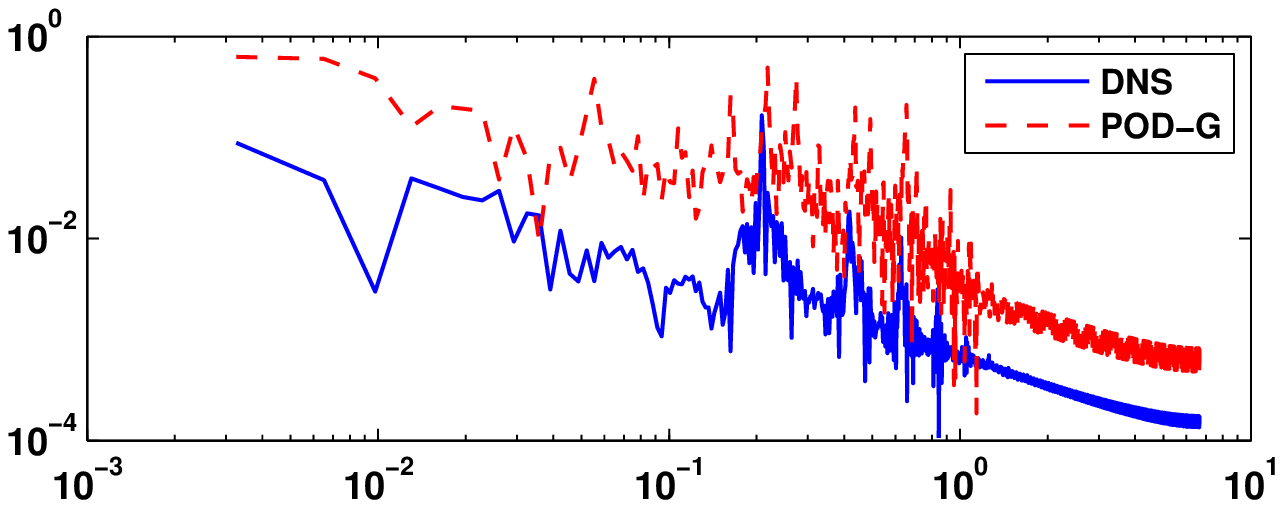}
\end{minipage}
\hspace*{0.05cm}
\begin{minipage}[c]{.02\linewidth}
{\small (b)}
\end{minipage}
\hspace*{0.05cm}
\begin{minipage}[c]{.45\linewidth}
\includegraphics[width=1\textwidth]{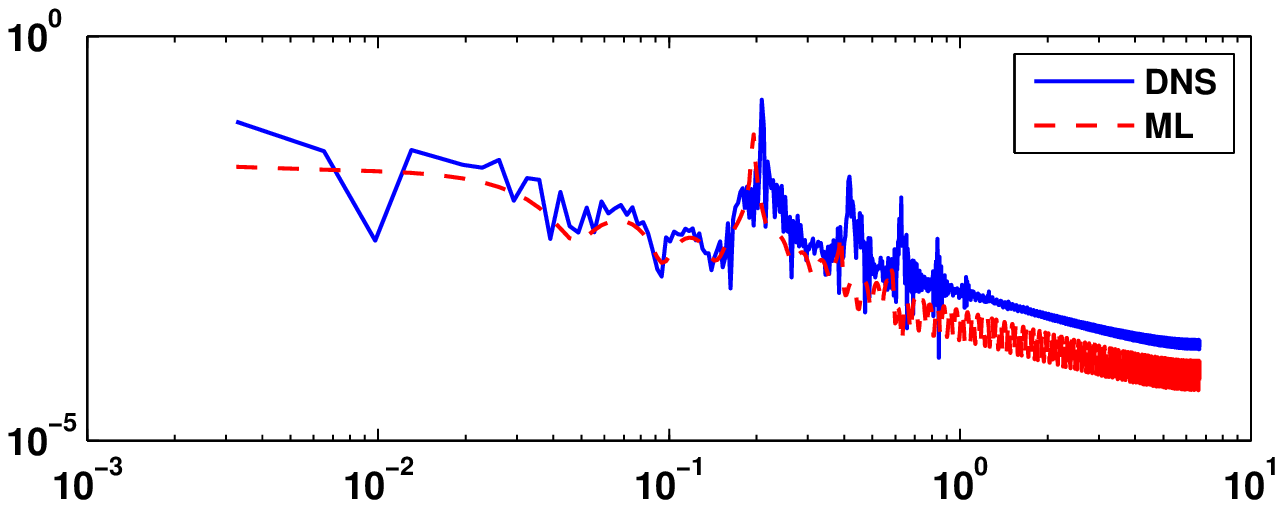}
\end{minipage}
\begin{minipage}[c]{.02\linewidth}
{\small (c)}
\end{minipage}
\hspace*{0.05cm}
\begin{minipage}[c]{.45\linewidth}
\includegraphics[width=1\textwidth]{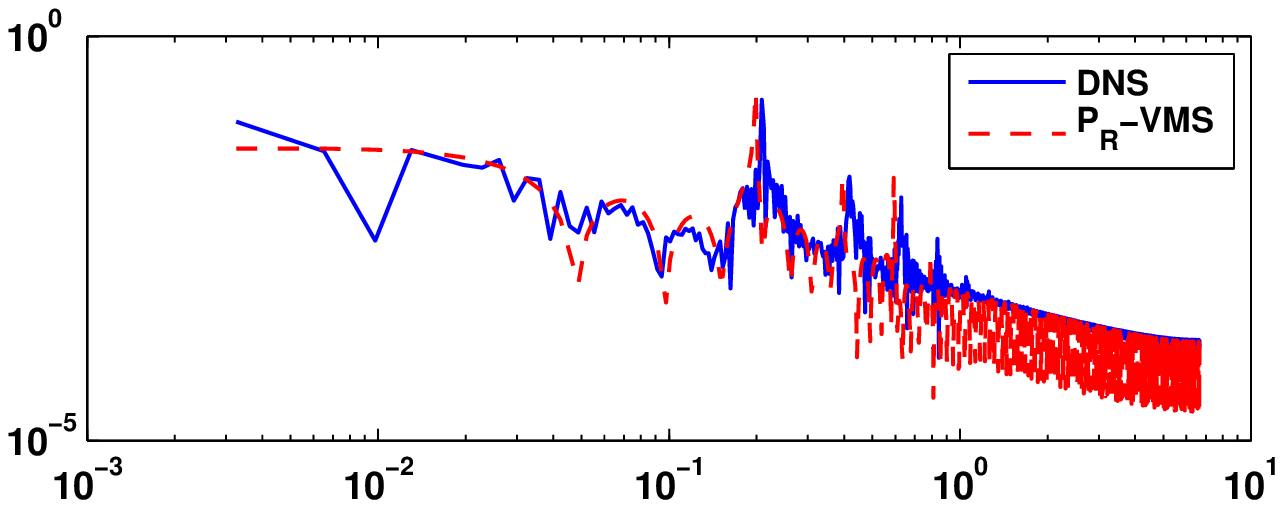}
\end{minipage}
\caption{
3D flow past a cylinder at $\mbox{Re}=1000$.
Comparison of energy spectrum of DNS with that of POD-G-ROM (a), ML-POD-ROM (b), and $P_R$-VMS-POD-ROM (c).
\label{fig_3d_egy}
}
\end{figure}

The POD-G-ROM \eqref{eq:pod-g} performs poorly, 
although it is computationally efficient (its CPU time is $118 \, s$). 
Indeed, the 
amplitude of the temporal evolution of the POD coefficient $a_{4}(\cdot)$ 
is {\it nine times larger} than that for the DNS projection.
The ML-POD-ROM's time evolutions of the POD coefficients $a_1$ and $a_4$ are also inaccurate. 
Specifically, although the time evolution at the beginning of the simulation (where the EV constant $\alpha$ 
was chosen) is relatively accurate, the accuracy significantly degrades toward the end of the simulation.
For example, as shown in Figure~\ref{fig_3d_evo}(b), the magnitude of $a_4$ at the end of the simulation is only one eighth of that of the DNS. 
The $P_R$-VMS-POD-ROM yields more accurate time evolutions than both the POD-G-ROM and the ML-POD-ROM for both $a_1$ and $a_4$, 
as shown in Figure~\ref{fig_3d_evo}(c). 
The $P_R$-VMS-POD-ROM is as efficient as POD-G-ROM, its CPU time being 129\, s.

Figure~\ref{fig_3d_egy} presents the energy spectra of the three POD-ROMs.
The three energy spectra are compared with the DNS energy spectrum.
For the energy spectra, we use the approach in \cite{wang2011closure} and we calculate the average kinetic energy of the nodes in the cube with side $0.1$ 
centered at the probe $(0.9992, 0.3575, 1.0625)$.
The energy spectrum of the POD-G-ROM \eqref{eq:pod-g} overestimates the energy spectrum of the DNS.
The energy spectrum of the ML-POD-ROM \eqref{eq:ml-pod}, on the other hand, underestimates the the energy spectrum 
of the DNS, especially at the higher frequencies.
Although it displays high oscillations at the higher frequencies, 
the $P_R$-VMS-POD-ROM \eqref{eq:vms-pod} has a more accurate spectrum than both the POD-G-ROM \eqref{eq:pod-g} and the ML-POD-ROM \eqref{eq:ml-pod}. 

%
%

\subsection{Numerical Accuracy}
\label{ss_numerical_results_2D}

In this section, we test the $P_R$-VMS-POD-ROM in the numerical simulation of the 2D incompressible NSE \eqref{eq:nse}. 
The exact velocity, $\bu = (u,v)$, has components $u = \frac{2}{\pi} \arctan(-500(y-t) ) \sin(\pi y)$, $v = \frac{2}{\pi}\arctan(-500(x-t) ) \sin(\pi x)$, and the exact pressure is given by $p = 0$.
The inverse of the Reynolds number is $\nu=10^{-3}$ and the forcing term is chosen to match the exact solution.
Taylor-Hood FEs are used to discretize the spatial domain $[0, 1]\times [0, 1]$. 
We collect snapshots over the time interval $[0, 1]$ at every $\Delta T = 10^{-2}$ by recording the exact values of $u$ and $v$ on the FE mesh with the mesh size $h=1/64$. 
After applying the method of snapshots, we obtain a POD basis set with the dimension of 101. 

In POD-ROMs, the backward Euler method is employed for time integration over the same time interval. To verify the numerical error of the $P_R$-VMS-POD-ROM \eqref{eq:vms-pod} with respect to the time step, $\Delta t$, we choose $h=1/64$, $r=99$, $R=95$ and $\alpha = 10^{-3}$. 
With this choice, $h^{2m}$ is on the order of $10^{-8}$, and $\sum\limits_{j = r+1}^{d}\|\bphi_j\|_1^2 \lambda_j$ and $ \sum\limits_{j = R+1}^{d}\|\bphi_j\|_1^2 \lambda_j$ are on the order of $10^{-4}$. 
Thus, asymptotically, the time discretization error dominates the total error in the theoretical error estimate \eqref{eq:theorem_error_1}. 
The total error in the $L^2$ norm at the final time, $\me = \| \bu^M - \bu^M_r \|$, is listed in Table \ref{table:dt} for decreasing values of the time step, $\Delta t$. 
A linear regression between $\me$ and $\Delta t$ (see Figure \ref{fig:dt}) shows 
\linelabel{rate1}
that the rate of convergence of the numerical error is nearly linear with respect to the time step, as predicted by the theoretical error estimate \eqref{eq:theorem_error_1}: 

\begin{equation*}
\me = 2.66 (\Delta t)^{0.96}.
\end{equation*}

\begin{figure}[htb]
\begin{minipage}[h]{0.45\linewidth}
\centering
\tabcaption{
The $P_R$-VMS-POD-ROM with $h=1/64$, $r=99$, $R=95$, and $\alpha = 10^{-3}$.
The total error in the $L^2$-norm at the final time, $\| \bu^M - \bu^M_r \|$, for decreasing values of the time step, $\Delta t$.
}\label{table:dt}
\centering
\begin{tabular}{|c|c|}
\hline
$\Delta t$&$\| \bu^M - \bu^M_r \|$\\
\hline
\multirow{1}{*}{$5\times10^{-3}$}      & $2.31\times 10^{-2}$\\
\multirow{1}{*}{$2.5\times10^{-3}$}    & $6.30\times 10^{-3}$\\
\multirow{1}{*}{$1.25\times10^{-3}$}  & $3.46\times 10^{-3}$ \\
\multirow{1}{*}{$6.25\times10^{-4}$}   & $2.05\times 10^{-3}$\\
\multirow{1}{*}{$3.13\times10^{-4}$}   & $1.45\times 10^{-3}$\\
\hline
\end{tabular}
\end{minipage}
\hspace{1cm}
\begin{minipage}[h]{0.5\linewidth}
\centering
\includegraphics[width=1\textwidth]{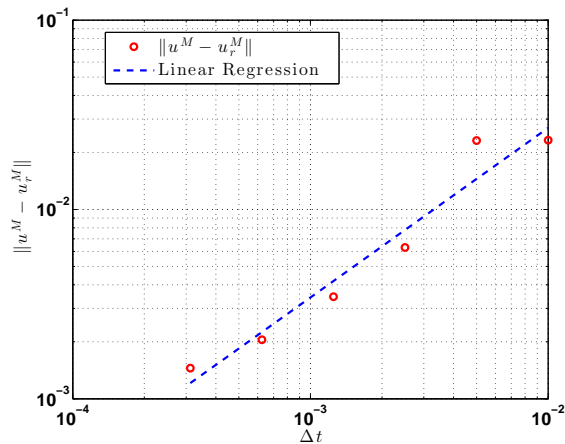}
\figcaption{
The $P_R$-VMS-POD-ROM with $h=1/64$, $r=99$, $R=95$ and $\alpha = 10^{-3}$.
A linear regression between the total error in the $L^2$-norm at the final time, $\me$, and the time step, $\Delta t$, is nearly linear : $\me \sim \mathcal{O}\lp (\Delta t)^{0.96}\rp$.
}\label{fig:dt}
\end{minipage}
\end{figure} 

To verify the numerical error of the $P_R$-VMS-POD-ROM with respect to $R$, we choose $h=1/64$, $\Delta t = 10^{-4}$, and $r=99$.
With this choice, $h^{2m}$ and $\Delta t^2$ are on the order of $10^{-8}$ and  $\sum\limits_{j = r+1}^{d}\|\bphi_j\|_1^2 \lambda_j$ is on the order of $10^{-4}$. 
Thus, asymptotically, the POD contribution to the error introduced by the EV term, $\sum_{j=  R+1}^d \|\bphi_j\|_1^2 \, \lambda_j $, dominates the total error in the theoretical error estimate \eqref{eq:theorem_error_1}. 
For a fixed $\alpha = 10^{-3}$, total error in the $L^2$ norm at the final time, $\me^2 = \| \bu^M - \bu^M_r \|^2$, is listed in Table \ref{table:R} for increasing values of $R$.
A linear regression between $\me^2$ and $\sum_{j=  R+1}^d \|\bphi_j\|_1^2 \, \lambda_j$ (see Figure \ref{fig:R}) shows 
\linelabel{rate2}
an almost quadratic rate of convergence, which is higher than the linear rate of convergence predicted by the theoretical error estimate \eqref{eq:theorem_error_1}: 

\begin{equation*}
\me^2 = c \left(\sum_{j=  R+1}^d \|\bphi_j\|_1^2 \, \lambda_j \right)^{1.94},
\end{equation*}
where $c=3.2\times 10^{-9}$. 


\begin{figure}[htb]
\begin{minipage}[h]{0.45\linewidth}
\centering
\tabcaption{
The $P_R$-VMS-POD-ROM with $h=1/64$, $\Delta t = 10^{-4}$, $r=99$, and $\alpha = 10^{-3}$.
The square of the total error in the $L^2$-norm at the final time, $\| \bu^M - \bu^M_r \|^2$, for increasing values of $R$.
}\label{table:R}
\begin{tabular}{|c|c|c|}
\hline
                      R & $\sum_{j=  R+1}^d \|\bphi_j\|_1^2 \, \lambda_j$ & $\| \bu^M - \bu^M_r \|^2$ \\
\hline
\multirow{1}{*}{6}   & $2.18\times 10^{2}$ & $1.43\times 10^{-4}$ \\
\multirow{1}{*}{10} & $1.99\times 10^{2}$ & $1.05\times 10^{-4}$ \\
\multirow{1}{*}{16} & $1.73\times 10^{2}$ & $6.70\times 10^{-5}$ \\
\multirow{1}{*}{24} & $1.43\times 10^{2}$ & $4.04\times 10^{-5}$ \\
\multirow{1}{*}{34} & $1.10\times 10^{2}$ & $2.39\times 10^{-5}$ \\
\multirow{1}{*}{45} & $7.80\times 10^{1}$ & $1.54\times 10^{-5}$ \\
\multirow{1}{*}{56} & $5.37\times 10^{1}$ & $8.92\times 10^{-6}$ \\
\hline
\end{tabular}
\end{minipage}
\hspace{1cm}
\begin{minipage}[h]{0.5\linewidth}
\centering
\includegraphics[width=1\textwidth]{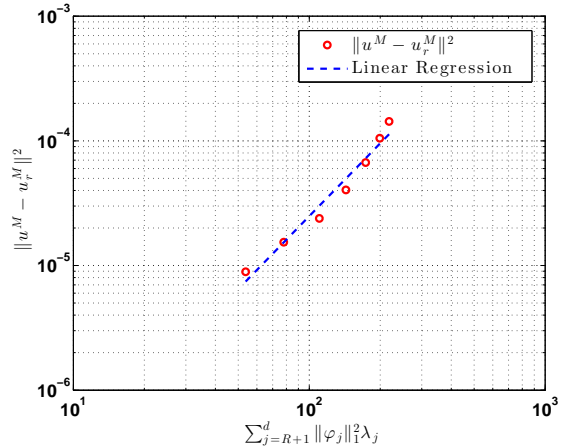}
\figcaption{
The $P_R$-VMS-POD-ROM with $h=1/64$, $\Delta t = 10^{-4}$, $r=99$, and $\alpha = 10^{-3}$.
A linear regression between the square of the total error in the $L^2$-norm at the final time, $\me^2$, and the the POD contribution to the error introduced by the EV term, $\sum_{j=  R+1}^d \|\bphi_j\|_1^2 \, \lambda_j$: $\me^2 \sim \mathcal{O}\lp \left(\sum_{j=  R+1}^d \|\bphi_j\|_1^2 \, \lambda_j \right)^{1.94} \rp$.
}\label{fig:R}
\end{minipage}
\end{figure}

\section{Conclusions}\label{con}

In this paper, we proposed a new ROM for the numerical simulation of turbulent incompressible fluid flows. 
This model, denoted $P_R$-VMS-POD-ROM, utilizes a VMS method and a projection operator to model the effect of the high index POD modes that are not included in the ROM.
A rigorous error estimate was derived for the full discretization of the $P_R$-VMS-POD-ROM.
All the contributions to the total error were considered: the spatial discretization error (due to the FE discretization), the temporal discretization error (due to the backward Euler method), and the POD truncation error.
The $P_R$-VMS-POD-ROM was also tested in the numerical simulation of a 3D flow past a circular cylinder at $Re=1000$.
The numerical tests showed that the $P_R$-VMS-POD-ROM is both physically accurate and computationally efficient.
Furthermore, the numerical results illustrated the theoretical error estimates.

We note that the EV coefficient $\alpha$ in the $P_R$-VMS-POD-ROM is simply chosen to be a constant.
We plan to extend this theoretical and numerical study by considering more accurate choices for the EV coefficients, such as the Smagorinsky model \cite{wang2011closure,ullmann2010pod}.
We also plan to investigate this model in more complex physical settings, such as the Boussinesq equations \cite{ravindran2010error}. 
Finally, we plan to reduce the computational cost of the $P_R$-VMS-POD-ROM by a different treatment of the time discretization of the VMS term.


\section*{Acknowledgement}
The first author acknowledges the partial support by the National Science Foundation (DMS-0513542 and OCE-0620464). The second author was supported by the Institute for Mathematics and its Applications with funds provided by the National Science Foundation. We thank the anonymous reviewers for their constructive comments, which helped improve the manuscript.

\bibliographystyle{plain}
{\small
\bibliography{D_bibliography}
}
\end{document}